\title{Recurrence Relations for Exceptional Hermite Polynomials}
\author{D. G\'omez-Ullate} \address{Departamento de F\'isica Te\'orica II, Universidad Complutense de
Madrid, 28040 Madrid, Spain.}
\address{Instituto de Ciencias Matem\'aticas (CSIC-UAM-UC3M-UCM),  C/ Nicolas Cabrera 15, 28049 Madrid, Spain.}
\author{A. Kasman} \address{Department of Mathematics, College of
Charleston, Charleston SC, USA}
\author{A.B.J.~Kuijlaars}
\address{Department of Mathematics, KU Leuven, Belgium}
\author{R. Milson} \address{Department of Mathematics,  Dalhousie University, Halifax, Nova Scotia, Canada}
\newcommand{\stabf}{f}
\newcommand{\Deltahat}{\hat\Delta_{\stabf}}
\newcommand{\tDelta}{\tilde\Delta_{\stabf}}
\newcommand{\C}{\mathbb{C}}
\newcommand{\N}{\mathbb{N}} 
\newcommand{\R}{\mathbb{R}} 
\newcommand{\shift}{\Theta}
\newcommand{\tildh}{\tilde{h}}
\newcommand{\tf}{\tilde{f}}
\newcommand{\rL}{\mathrm{L}}
\newcommand{\LRWlam}{\rL^2(\R,\hW(x)dx)}
\newcommand{\cA}{\relax{A}}
\newcommand{\cB}{\relax{B}}
\newcommand{\cH}{\mathcal{H}}
\newcommand{\StabRing}{\mathcal{S}}
\newcommand{\Ilam}{\mathcal{I}}
\newcommand{\Wlam}{\hat W}
\newcommand{\cU}{\mathcal{U}}
\newcommand{\hh}{\hat{h}}
\newcommand{\hW}{\hat{W}}
\newcommand{\hT}{\hat{T}}
\newcommand{\Wr}{\operatorname{Wr}}
\newcommand{\Wrop}[1]{\Wr\left[ #1\right]}
\newcounter{theoremct}[section]
\newtheorem{corollary}[theoremct]{Corollary}
\newtheorem{lemma}[theoremct]{Lemma}
\newtheorem{prop}[theoremct]{Proposition}
\newtheorem{theorem}[theoremct]{Theorem}
\theoremstyle{definition} \newtheorem{definition}[theoremct]{Definition}
\theoremstyle{remark} \newtheorem{remark}[theoremct]{Remark}
\numberwithin{equation}{section}
\begin{document}
\begin{abstract}
The bispectral anti-isomorphism is applied to differential operators involving elements of the stabilizer ring to produce explicit formulas for all difference
  operators having any of the Hermite exceptional orthogonal polynomials as eigenfunctions with eigenvalues that are polynomials in $x$. 
	\end{abstract}

\maketitle

\section{Introduction}

The classical orthogonal polynomial families of Hermite, Laguerre and
Jacobi \cite{Szego1939} have three important properties: they are
eigenfunctions of a differential operator in $x$ having eigenvalues
dependent on $n$, they satisfy a well-known three-term recurrence
relation, and they form a basis of a given $L^2$ space.

In the past few years, the class of polynomials having the above
properties has been extended to include exceptional orthogonal
polynomials, which differ from the classical families in that there
are a finite number of degrees for which no polynomial eigenfunction
exists.  The differential equation for exceptional polynomials
contains rational instead of polynomial coefficients.  
If certain regularity assumptions are imposed then, like their classical counterparts, the exceptional orthogonal polynomials form the basis of a weighted $L^2$ space.

This paper is mostly concerned with the study of recurrence relations
for exceptional Hermite polynomials.  Some particular examples of
these polynomials were investigated as early as \cite{Dubov1994}, but
a final classification appeared only recently in
\cite{Gomez-Ullate2014a}.  See that paper for a more detailed overview
of the subject and for additional references. 
Exceptional Hermite polynomials were also derived from  exceptional Charlier polynomials by taking a suitable limit \cite{Duran2014}.

Exceptional polynomials admit two different sets of recurrence
relations.  The first set of recurrence relations 
\cite{Odake2013a,Gomez-Ullate2014a} have coefficients that are polynomial
functions of $x$ and $n$ and consequently have no obvious bispectral
interpretation; i.e., they cannot be interpreted as higher-order
Jacobi operators. We shall not investigate these relations in this paper.  The second set of recurrence relations have  order $2N+3$ where
$N$ is the codimension (number of missing degrees) of the exceptional
family. Except for one term, the coefficients do not depend on $x$ and
are rational functions of $n$. This type of relations will be the main
focus of our study.

The existence of this second type of recurrence relations of order
$4N+1$ was first established in \cite{Sasaki2010}.  Relations of order $2N+3$
order  for exceptional families obtained through a one-step
Darboux transformation were recently given in \cite{Miki2015}, and
later generalized in \cite{Odake2015}, but in both cases an explicit
construction of the recurrence coefficients was missing. Somewhat
closer to this goal is the recent work of Dur\'an \cite{Duran2015a},
which provides explicit relations for some examples of exceptional
Hermite and Laguerre polynomials. His approach uses a bispectral
correspondence between discrete Krall polynomials and exceptional
Charlier polynomials and a limit procedure to obtain the recurrence
relations for the exceptional Hermite.

The main result of this note (Theorem~\ref{thm:Deltaf}) is a straight-forward procedure for producing the  general recurrence relations satisfied by the exceptional  polynomials. We focus on the Hermite subclass but it is clear that the same techniques can be applied to the other families as well.  In particular, by making use of a bispectral anti-isomorphism of rings of operators acting on the classical Hermite polynomials, the production of recurrence relations for the exceptional Hermite polynomials is reduced to elementary algebra.

The idea of the bispectral anti-isomorphism for creating ``bispectral
Darboux transformations'' was pioneered by Kasman-Rothstein \cite{Kasman1997}, 
further developed by Bakalov-Horozov-Yakimov \cite{Bakalov1996}, and first used in the context of orthogonal polynomials by Gr\"unbaum-Yakimov \cite{Grunbaum2002}.  The construction
below does not depend on those prior results but is an application of
the same ideas to the new context of exceptional orthogonal polynomials.

\section{Background}

\subsection{Hermite Polynomials}

Set $h(n,x)=H_n(x)$, where the latter is the $n$th degree Hermite
polynomial as defined in Section 18.3 of \cite{NIST}.  These polynomials
have associated operators with the following properties.

Letting
\[ \shift(f(n)) = f(n+1)\] denote the elementary shift operator, define
the following degree-raising and lower operators,
\begin{align} \label{eq:Delta}
  \Delta  &= \frac12 \shift  + n\circ  \shift^{-1}, \\
   \Gamma &= 2n\circ \shift^{-1}, \label{eq:Gamma}
\end{align}
where the $n$ denotes the indicated multiplication operator.  The action of 
$\Delta$ and $\Gamma$ on the classical Hermite polynomials can also be
expressed as operators in $x$, namely
\begin{align}
  \label{eq:3term}
   x h(n,x) &= \Delta h(n,x)  \\
 \partial h(n,x) &= \Gamma h(n,x)
 \end{align}
 where $\partial = \frac{\partial}{\partial x}$ differentiates with respect to $x$.
 Note that \eqref{eq:3term} is just the 3-term recurrence relation for
 the classical Hermite polynomials.  We will also need the
 second-order operator
 \[T(y(x)) = y''(x) - 2x y'(x) \] and express the Hermite differential
 equation as
\begin{equation}T(h(n,x))=-2nh(n,x).\label{eqn:Teigen}\end{equation}

\begin{remark} To read and understand the remainder of the
paper, it is helpful to keep in mind that operators in $x$ only always commute with
operators acting only in $n$.   In general, we
will try to write operators in $x$ with Roman symbols (except for $\partial$) and operators in
$n$ with Greek ones to help the reader remember which variable an operator acts in.  However, the map $\flat$ to be introduced later
turns operators in $x$ into operators in $n$.  Moreover, 
$\flat(R)$ and $R$ have the same effect on $h(n,x)$ for any $R\in\C[x,\partial]$.  So, when either one of these expressions
appears next to $h(n,x)$ in a formula it is possible to replace it
by the other.
\end{remark}

\subsection{Exceptional Hermite Polynomials}\label{sec:XOPintro}

This subsection quickly reviews the definitions related to the Exceptional Hermite Polynomials $\hh(n,x)$ associated to the choice of a partition $\lambda$.  For proofs of the claims and more details, see \cite{Gomez-Ullate2014a}.

Let $\lambda_1\leq \lambda_2\leq \cdots\leq\lambda_{\ell}$ be a
non-decreasing sequence of positive integers so that
$\lambda=\{\lambda_1,\ldots,\lambda_{\ell}\}$ is a partition of the
positive integer $N=\sum \lambda_i$ into $\ell$ parts\footnote{Many of
  the objects in this paper depend on the selection of $\lambda$.
  However, to avoid cumbersome notation, we will consider $\lambda$ to
  be selected and fixed and not specifically indicate the dependence
  of objects, such as $\hh(n,x)$, $\StabRing$, $\cU$ and $\Ilam$ on
  this choice.}.  It is convenient to associate to the choice of
$\lambda$ the set $K=\{ k_1,\ldots, k_\ell \}$, where
\begin{equation}
  \label{eq:kidef}
  k_i = \lambda_i+i-1,\quad  i=1,\ldots,\ell.   
\end{equation}

Using $\Wr$ to denote the Wronskian determinant with respect to $x$, to the selection of partition $\lambda$ (or sequence $K$)
we associate the polynomial $\eta(x)$ defined by
\begin{equation}
  \label{eq:etadef}
  \eta(x)=\Wrop{h(k_1,x),\ldots,h(k_\ell,x)},
\end{equation}
and define the order $\ell$ differential operator $\cA$ by its action on an
arbitrary function $y(x)$,
\begin{equation}
  \label{eq:Aydef}
  \cA(y(x))=\Wrop{h(k_1,x),\ldots,h(k_\ell,x),y(x)}.
\end{equation}

Now, a new sequence of polynomials is defined by 
\begin{equation}
  \hh(n,x):= \cA(
  h({n+\ell- N },x)),
\end{equation}
where it is clear from \eqref{eq:Aydef} that $\hh(n,x)$ is either a degree $n$ polynomial in $x$ or zero.

The degree sequence of the resulting non-zero polynomials is
\begin{equation}
  \label{eq:Ilamdef}
  \Ilam = \{ n \in \N_0 : n \geq  N  -\ell, n+\ell- N 
  \notin K \}. 
\end{equation}
The degrees $0,1,\ldots , N -\ell-1$ and the degrees $k_1 + N
-\ell,\ldots, k_\ell + N -\ell$ are missing from $\Ilam$, so that the
polynomial sequence $\{\hh(n,x):n\in\Ilam\}$ is missing a total of $ N $
degrees.  Let us also denote by $\cU\subset\C[x]$ the set of finite linear combinations of
exceptional Hermite polynomials:
\begin{equation}
\cU=\operatorname{span} \{ \hh(n,x) \colon n\in\Ilam\}
\end{equation}

The resulting families of polynomials generalize the classical Hermite
family because they satisfy a Hermite-like differential equation
\begin{equation}
   \label{eq:XDE}
    \hT(\hh(n,x)) = 2(N-n) \hh(n,x),\qquad n\in \Ilam
    \end{equation}
    with
\begin{equation}
  \hT(y):=y''-2\left(x+\frac{\eta'}{\eta} \right)
  y'+ \left(\frac{\eta''}{\eta}+ 2 x
    \frac{\eta'}{\eta} \right) y.\label{eqn:hatT}
\end{equation}

We will say that $\lambda$ is an even partition if $\ell$ is even and
if $\lambda_{2i-1} = \lambda_{2i}$ for every $i\leq \ell/2$.  For an
even partition $\lambda$, the exceptional Hermite polynomials
$\hh(n,x)$ satisfy the orthogonality relations\footnote{This formula
  corrects a misprint in equation (52) of \cite{Gomez-Ullate2014a}.}
$$ \int_\R \hh(n,x) \hh(m,x) \Wlam(x) dx =
\delta_{m,n} \sqrt{\pi}\, 2^{j+\ell} j! \prod_{i=1}^{\ell}(j-k_i),\qquad n,m\in\Ilam
$$
where $j = n+\ell-N$, and the weight is defined as
$ \Wlam(x) = \relax{e^{-x^2}}/{\eta(x)^2}$.
Moreover,  if $\lambda$ is an even partition then $\cU$ is  dense in $\LRWlam$.

Despite the importance of even partitions to the role of $\hh(n,x)$ as orthogonal polynomials, the formulas for the recurrence relations that constitute the main result of this paper are valid for an arbitrary choice of $\lambda$ so this distinction will not be further emphasized.

\subsection{Bispectrality}

The bispectral problem, as proposed by Gr\"unbaum and initially
studied by Duistermaat-Gr\"unbaum \cite{Duistermaat1986} seeks to find functions
depending on two variables that are joint eigenfunctions for operators
in each of the two variables separately so that in each case the
eigenvalue depends non-trivially on the other variable.  Specifically, a
\textit{bispectral triple} $(L,\Lambda,\psi)$ consists of a function
$\psi(x,z)$, an operator $L$ in $x$ satisfying $L\psi=p(z)\psi$ and an
operator $\Lambda$ in $z$ satisfying $\Lambda\psi=\pi(x)\psi$.

A great body of research by many authors has investigated
bispectrality for various different kinds of operators, establishing
theorems, classifying examples, finding connections to other areas of
research such as soliton equations, or quantum and classical particle
systems.  (See, for instance,
\cite{Harnad1998} and references therein.)
Additionally, it has long been recognized that the classical orthogonal polynomials
can be interpreted as exhibiting bispectrality if they are viewed as
functions of two variables (one continuous and one discrete) rather
than as a sequence of separate functions of $x$.  The differential
equation in $x$ and the recurrence relation are then the two eigenvalue
equations.  (See, for example,
\cite{Grunbaum1999,spiridonov1998}.)

\section{The Anti-Isomorphism}

The key observation leading to the main result is that the ring
$\C[x,\partial]$ of polynomial coefficient differential operators and
the ring $\C[\Delta,\Gamma]$ of difference operators generated by
$\Delta$ and $\Gamma$ are naturally anti-isomorphic.
\begin{definition} \label{def:flat}
Define the map $\flat$ by its action on monomials:
\begin{eqnarray*}
\flat:\C[x,\partial]&\to&\C[\Delta,\Gamma]\\
x^i\partial^j&\mapsto&\Gamma^j\Delta^i
\end{eqnarray*}
extended linearly to all of $\C[x,\partial]$.
\end{definition}

\begin{theorem}\label{thm:flat}
The map $\flat:\C[x,\partial]\to\C[\Delta,\Gamma]$ 
satisfies
$$
Q(h(n,x))=\flat(Q)h(n,x)\ \forall Q\in\C[x,\partial]
$$
and
$$
\flat(Q\circ R)=\flat(R)\circ \flat(Q)\ \forall Q,R\in\C[x,\partial].
$$
In other words, $\flat$ is an anti-isomorphism that has the property that an operator and its image always have exactly the same effect on the classical Hermite polynomials.
\end{theorem}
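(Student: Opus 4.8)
The plan is to prove the two displayed identities in turn, deriving the anti-isomorphism claim as a consequence of the first identity together with a faithfulness argument. The conceptual picture to keep in mind is that both $\C[x,\partial]$ and $\C[\Delta,\Gamma]$ are copies of the first Weyl algebra: the former because $[\partial,x]=1$, and the latter because a short computation with $\Delta=\frac12\shift+n\circ\shift^{-1}$ and $\Gamma=2n\circ\shift^{-1}$ gives $\Delta\Gamma-\Gamma\Delta=1$. The map $\flat$ swaps the two generators and reverses order, so that the assertion ``$\flat$ is an anti-homomorphism'' is precisely the statement that the images $\Delta=\flat(x)$ and $\Gamma=\flat(\partial)$ obey the opposite commutation relation $[\Delta,\Gamma]=1$; this is what makes the whole scheme consistent.

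First I would establish the action identity $Q(h(n,x))=\flat(Q)h(n,x)$ on monomials and then extend by linearity. The base cases are exactly the given relations $xh=\Delta h$ and $\partial h=\Gamma h$. Using the fact (recorded in the Remark) that any operator acting only in $x$ commutes with any operator acting only in $n$, a routine induction yields $x^i h=\Delta^i h$ and $\partial^j h=\Gamma^j h$ for all $i,j$: for instance $\partial^{j+1}h=\partial(\Gamma^j h)=\Gamma^j(\partial h)=\Gamma^{j+1}h$, and similarly for $x$. Combining these and commuting $x^i$ (in $x$) past $\Gamma^j$ (in $n$) gives
$$
x^i\partial^j\, h = x^i(\Gamma^j h)=\Gamma^j(x^i h)=\Gamma^j\Delta^i h=\flat(x^i\partial^j)\,h,
$$
which is the first identity on monomials; linearity finishes it.

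For the second identity I would use the first together with faithfulness. Because the Hermite polynomials $\{h(n,x):n\in\N_0\}$ form a basis of $\C[x]$ and the Weyl algebra acts faithfully on $\C[x]$, any $Q\in\C[x,\partial]$ with $Qh(n,x)=0$ for all $n$ must vanish; via the first identity this shows $\flat$ is injective. Surjectivity is immediate once $[\Delta,\Gamma]=1$ is in hand, since that relation lets every word in $\Delta,\Gamma$ be normal-ordered as a combination of the monomials $\Gamma^j\Delta^i$ lying in the image of $\flat$. Hence $\flat$ is a linear bijection, and the action on $h$ is faithful on the $\C[\Delta,\Gamma]$ side as well: if $P\in\C[\Delta,\Gamma]$ kills every $h(n,x)$, write $P=\flat(Q)$ and conclude $Q=0$, so $P=0$. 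Now for $Q,R\in\C[x,\partial]$ I compute, using the first identity twice and commuting $Q$ (in $x$) past $\flat(R)$ (in $n$),
$$
(Q\circ R)h = Q\bigl(\flat(R)h\bigr)=\flat(R)\bigl(Qh\bigr)=\flat(R)\,\flat(Q)\,h,
$$
while the first identity also gives $(Q\circ R)h=\flat(Q\circ R)h$. Faithfulness on the $\C[\Delta,\Gamma]$ side then forces $\flat(Q\circ R)=\flat(R)\circ\flat(Q)$.

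I expect the main obstacle to be the faithfulness bookkeeping rather than any single computation: one must carefully distinguish operators acting in $x$ from those acting in $n$ (the only properties used of this distinction are that such operators commute and that $\{h(n,x)\}$ spans $\C[x]$), and one must verify $[\Delta,\Gamma]=1$ cleanly, since both surjectivity and the anti-homomorphism property ultimately rest on it. Everything else is a short induction together with the standard fact that a nonzero element of the Weyl algebra cannot annihilate all polynomials.
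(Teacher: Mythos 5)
Your proposal is correct, and its computational core coincides with the paper's proof: both verify $Q(h(n,x))=\flat(Q)h(n,x)$ on monomials $x^i\partial^j$ by commuting the operator in $x$ past the operator in $n$, and both obtain the anti-homomorphism property from the chain $\flat(R)\circ\flat(Q)h=\flat(R)(Qh)=Q(\flat(R)h)=(Q\circ R)h=\flat(Q\circ R)h$, so that $\flat(Q\circ R)-\flat(R)\circ\flat(Q)$ annihilates every $h(n,x)$. Where you genuinely diverge is in the final step: the paper simply asserts that an element of $\C[\Delta,\Gamma]$ killing all the $h(n,x)$ must be the zero operator, whereas you prove it --- by computing $[\Delta,\Gamma]=1$, using normal ordering to show $\flat$ is surjective, and invoking faithfulness of the Weyl algebra acting on $\C[x]$ (the $h(n,x)$ span $\C[x]$, and a polynomial-coefficient differential operator annihilating all polynomials is zero). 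This buys two things the paper leaves implicit: a justification of the kernel-implies-zero claim, and an actual proof that $\flat$ is bijective, which the word ``anti-isomorphism'' in the statement technically requires. For completeness, note there is also a more direct route to the same kernel claim, closer to what the paper presumably had in mind: write $P=\sum_k a_k(n)\shift^k$ with the $a_k$ polynomial in $n$; for fixed large $n$ the polynomials $h(n+k,x)$ have distinct degrees and are linearly independent in $\C[x]$, so $Ph(n,x)=0$ forces every $a_k(n)=0$ for all large $n$, hence $a_k\equiv 0$. Either justification closes the gap; yours has the added merit of exhibiting the Weyl-algebra structure that makes the anti-isomorphism natural.
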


\begin{proof} Because $\flat$ extends linearly it is sufficient to check the first property on a monomial of the form $x^i\partial^j$.  Then observe that because $x$ and $\Gamma$ commute one has
$$
x^i\partial^j h(n,x)=x^i\Gamma^jh(n,x)=\Gamma^jx^ih(n,x)=\Gamma^j\Delta^ih(n,x)=\flat(x^i\partial^j)h(n,x).
$$

As a consequence, we have for any $Q$ and $R$ in $\C[x,\partial]$ that
$$
\flat(Q\circ R)h(n,x)=(Q\circ R)h(n,x).
$$
But, note that $$\flat(R)\circ \flat(Q)h(n,x)=\flat(R)(Qh)=Q(\flat(R)h(n,x))
=Q(R(h(n,x)))=(Q\circ R)h(n,x)
$$
also because $Q$ commutes with $\flat(R)$.  Then $\flat(Q\circ R)-\flat(R)\circ \flat(Q)$ is an operator in $\C[\Delta,\Gamma]$ which has the function $h(n,x)$ in its kernel for all $n$.  This implies that it is the zero operator.
\end{proof}

\section{Factorization Lemma and Eigenvalue Equations}

Define another differential operator $\cB$ of order $\ell$ by stating
that for an arbitrary function $y(x)$,
\begin{equation}
  \label{eq:Bydef}
  \cB(y(x))=(-1)^{\lfloor \ell/2\rfloor} e^{x^2} \eta^{-\ell}
  \Wrop{\tildh_{1}(x),\ldots, \tildh_{\ell}(x),e^{-x^2} y(x)}
\end{equation}
where
\begin{equation}
  \label{eq:tildhdef}
  \tildh_{j}(x) :=\Wrop{h({k_1},x),\ldots
  \widehat{h({k_j},x)},\ldots,h({k_\ell},x)}
\end{equation}
with the hat decoration indicating omission from the sequence\footnote{This formula corrects a misprint in equation (27) of \cite{Gomez-Ullate2014a}.}.
The main result of this section is Corollary~\ref{cor:BApi} providing eigenvalue equations for the products $\cA\circ\cB$ and $\cB\circ\cA$.  Unfortunately, the introduction of several technical definitions and lemmas is necessary to obtain it.


For $0\leq j\leq k$, let $\lambda^{(j)}$ denote the
truncated partition $\lambda^{(j)} = (\lambda_1\leq \cdots \leq
\lambda_j)$ and $K^{(j)} = \{ k_1, \ldots, k_{j}\}$ the
corresponding truncated set.  Let
\begin{equation} \label{eq:etaj} 
	\eta_j(x) = \Wrop{h({k_1},x),\ldots, h({k_j},x)}, 
	\end{equation} denote the
corresponding Hermite Wronskians.  For $1\leq j\leq
\ell$, define the first-order differential operators
\begin{align} \label{eq:Aj}
  A_j(y) &:= \frac{\eta_j}{\eta_{j-1}} \left(y' -
    \frac{\eta_j'}{\eta_j}  y\right)= \frac{\Wrop{\eta_j,y}}{\eta_{j-1}},\\
    \label{eq:Bj}
  B_j(y) &:= \frac{\eta_{j-1}}{\eta_j}\left( y' - \left( 2x +
      \frac{\eta_{j-1}'}{\eta_{j-1}}\right)y \right) =
  \frac{\Wrop{e^{x^2} \eta_{j-1},y}}{e^{x^2} \eta_j},
\end{align}
where we define
$ \eta_0=1$.

\begin{prop}\label{prop:fact}
  The above operators are linear factors of the order $\ell$
  operators $\cA$ and $\cB$ defined in \eqref{eq:Aydef} and
  \eqref{eq:Bydef}.  Indeed, the following factorizations hold:
  \begin{align}
    \label{eq:Afactor}
    \cA &=  A_{\ell} \circ \cdots \circ A_2\circ  A_1\\
    \label{eq:Bfactor}
    \cB &= B_1\circ  B_2\circ \cdots\circ B_{\ell}
  \end{align}
\end{prop}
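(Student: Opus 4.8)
The plan is to prove both factorizations as identities in the ring $\C(x)[\partial]$ of rational-coefficient differential operators, with a single engine: the classical bilinear Wronskian (Sylvester--Jacobi) identity
\[
  \Wrop{\,\Wrop{\phi_1,\ldots,\phi_m,f},\,\Wrop{\phi_1,\ldots,\phi_m,g}\,}
  = \Wrop{\phi_1,\ldots,\phi_m}\,\Wrop{\phi_1,\ldots,\phi_m,f,g},
\]
valid for arbitrary smooth $\phi_i,f,g$. Writing $\psi_i:=h(k_i,x)$, I would first record the cheap leading-symbol checks: the composition $A_\ell\circ\cdots\circ A_1$ has leading coefficient $\prod_{j=1}^\ell \eta_j/\eta_{j-1}=\eta_\ell=\eta$, matching the coefficient of $y^{(\ell)}$ in $\cA(y)=\Wrop{\psi_1,\ldots,\psi_\ell,y}$, whereas $B_1\circ\cdots\circ B_\ell$ has leading coefficient $\prod_{j=1}^\ell \eta_{j-1}/\eta_j=1/\eta$.

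For \eqref{eq:Afactor} I would argue by induction on $j$ that $A_j\circ\cdots\circ A_1(y)=\Wrop{\psi_1,\ldots,\psi_j,y}$. The base case is immediate, since $\eta_0=1$ gives $A_1(y)=\Wrop{\eta_1,y}=\Wrop{\psi_1,y}$. For the inductive step I apply $A_{j+1}$ to $\Wrop{\psi_1,\ldots,\psi_j,y}$ using the second form of \eqref{eq:Aj}, namely $A_{j+1}(z)=\Wrop{\eta_{j+1},z}/\eta_j$, and then invoke the bilinear identity with $\phi_i=\psi_i$ $(i\le j)$, $f=\psi_{j+1}$, $g=y$; since $\Wrop{\psi_1,\ldots,\psi_j}=\eta_j$ and $\Wrop{\psi_1,\ldots,\psi_{j+1}}=\eta_{j+1}$, the $\eta_j$ factors cancel and one lands on $\Wrop{\psi_1,\ldots,\psi_{j+1},y}$. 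Taking $j=\ell$ yields $\cA=A_\ell\circ\cdots\circ A_1$. This half is routine.

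The factorization \eqref{eq:Bfactor} is where the real work lies, and I would attack it in the mirror image. The defining Wronskian \eqref{eq:Bydef} exhibits $\cB$, up to the scalar prefactor $(-1)^{\lfloor \ell/2\rfloor} e^{x^2}\eta^{-\ell}$ and the substitution $y\mapsto e^{-x^2}y$, as the Crum operator $g\mapsto\Wrop{\tildh_1,\ldots,\tildh_\ell,g}$ built from the complementary Wronskians $\tildh_i$. Applying the result just proved to the seed $\tildh_1,\ldots,\tildh_\ell$ factors this Crum operator into first-order pieces, and conjugating the prefactor through each piece — using $e^{x^2}\circ\partial\circ e^{-x^2}=\partial-2x$, which is exactly the source of the $2x$ in \eqref{eq:Bj} — puts the differentiation part of every factor into the right shape. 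What remains, and what I expect to be the main obstacle, is the purely determinantal bookkeeping: one must re-express the nested Wronskians $\Wrop{\tildh_1,\ldots,\tildh_j}$ of the complementary functions through the original $\eta_j$ — in particular the cofactor identity $\Wrop{\tildh_1,\ldots,\tildh_\ell}=(-1)^{\lfloor \ell/2\rfloor}\eta^{\,\ell-1}$, which is precisely what forces the constant in \eqref{eq:Bydef} and reconciles the two leading coefficients computed above — and then distribute the surplus powers of $\eta$ across the factors so that each telescoping block collapses to $B_j=\Wrop{e^{x^2}\eta_{j-1},\cdot}/(e^{x^2}\eta_j)$. Because a factorization of an order-$\ell$ operator into first-order factors is not unique, this cannot be done factor-by-factor; I would therefore run the identification at the level of the whole product. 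Equivalently, and perhaps more cleanly, I would verify that $\cB$ and $B_1\circ\cdots\circ B_\ell$ are the two order-$\ell$ operators sharing the leading coefficient $1/\eta$ and the common $\ell$-dimensional kernel $\operatorname{span}\{e^{x^2}\tildh_1,\ldots,e^{x^2}\tildh_\ell\}$, using relations such as $\Wrop{\eta_{\ell-1},\tildh_\ell}=0$ (from $\tildh_\ell=\eta_{\ell-1}$) to check the kernel inclusions. The sign $(-1)^{\lfloor \ell/2\rfloor}$ and the cofactor-Wronskian identities are the only genuinely delicate points; everything else is a cascade of applications of the same bilinear identity used for $\cA$.
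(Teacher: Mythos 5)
Your treatment of \eqref{eq:Afactor} is complete and correct: the induction showing $A_j\circ\cdots\circ A_1(y)=\Wrop{h(k_1,x),\ldots,h(k_j,x),y}$ via the bilinear identity \eqref{eq:wronsk2} is, if anything, more direct than the paper's own argument (which instead shows that both operators annihilate $h(k_1,x),\ldots,h(k_\ell,x)$ and compares leading coefficients). Likewise, your ``Route 2'' for \eqref{eq:Bfactor} --- identify $\cB$ and $B_1\circ\cdots\circ B_\ell$ as order-$\ell$ operators with the same leading coefficient $1/\eta$ and the same $\ell$-dimensional kernel $\operatorname{span}\{e^{x^2}\tildh_1,\ldots,e^{x^2}\tildh_\ell\}$ --- is exactly the strategy the paper follows, including the use of the cofactor identity \eqref{eq:wronsk3} to compute the leading coefficient of $\cB$.

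However, there is a genuine gap precisely where you locate ``the real work'': the kernel check for the product. The only relation you actually invoke, $\Wrop{\eta_{\ell-1},\tildh_\ell}=0$, shows that $B_\ell$ (hence the whole product) annihilates $e^{x^2}\tildh_\ell$; it says nothing about $e^{x^2}\tildh_j$ for $j<\ell$, and those functions are \emph{not} annihilated by $B_\ell$. What is needed, and what the paper supplies by induction on $\ell$, is twofold: (i) the one-step computation $B_\ell(e^{x^2}\tildh_j)=-e^{x^2}\tildh_{j,\ell}$ (from \eqref{eq:Bj}, \eqref{eq:wronsk1} and \eqref{eq:wronsk2}), where $\tildh_{j,\ell}$ denotes the Wronskian with both $h(k_j,x)$ and $h(k_\ell,x)$ omitted; and (ii) the identification, via the inductive hypothesis, of $B_1\circ\cdots\circ B_{\ell-1}$ with the Wronskian-form operator \eqref{eq:Bydef} attached to the truncated partition $K^{(\ell-1)}$, which then annihilates $e^{x^2}\tildh_{j,\ell}$ because the resulting Wronskian has a repeated entry. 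Your proposal gestures at ``a cascade of applications of the same bilinear identity'' but never states (i), nor the recursion that makes the cascade close up; without it the common-kernel claim is unproven. Your alternative ``Route 1'' stalls for a related reason: it requires evaluating the intermediate Wronskians $\Wrop{\tildh_1,\ldots,\tildh_j}$ of complementary minors for $j<\ell$, which are not covered by \eqref{eq:wronsk1}--\eqref{eq:wronsk3} (one needs genuine Jacobi minor identities that you do not supply). So the architecture of your proof is the paper's architecture, but as written the $\cB$ half is a plan with its central step missing rather than a proof.
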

\begin{proof}
  The proof makes use of the following classical identities
  \cite{muir}.  Throughout, the arguments are sufficiently
  differentiable functions of one variable.
  \begin{align}
    \label{eq:wronsk1}
    \Wrop{g f_1, g f_2, \ldots, g f_n} &= g^n \Wrop{f_1, f_2,\ldots, f_n},\\
    \label{eq:wronsk2}
     \Wrop{\Wrop{f_1, \ldots, f_n , g}, \Wrop{f_1,\ldots, f_n, h}} 
    & = \Wrop{f_1,\ldots, f_n} \Wrop{f_1,\ldots, f_n ,g,h},\\
    \label{eq:wronsk3}
    \Wrop{\tf_1,\ldots, \tf_n} & = (-1)^{\lfloor n/2\rfloor}
    \Wrop{f_1,\ldots, f_n}^{n-1},
   \end{align}
   where
  \begin{align} \nonumber
    \tf_j = \Wrop{f_1,\ldots, \hat{f_j},\ldots, f_n}.
  \end{align}

  Let us prove \eqref{eq:Afactor}.  Since $A_j$ annihilates $\eta_j$
  and $A_{j-1} \circ \cdots \circ A_1 h(k_j,x) = \eta_j(x)$ (which
  can be shown using \eqref{eq:etaj}, \eqref{eq:Aj} and \eqref{eq:wronsk2}),
  the product $A_\ell \circ \cdots \circ A_1$ annihilates all
  $h(k_j,x),\; j=1,\ldots, \ell$.  By construction, the $\ell$th order
  operator $A$ has the same property and so must be a multiple of the
  product.  Equality is established by considering the coefficient of
  the highest order derivative $y^{(\ell)}(x)$.

  We prove \eqref{eq:Bfactor} by induction on $\ell$. 
  For $\ell = 1$ the equality of \eqref{eq:Bydef} and \eqref{eq:Bj}  with $j=1$
  follows from \eqref{eq:wronsk1}.
  Suppose that
  the identity \eqref{eq:Bfactor} holds for $\ell-1$.  
  For $1\leq i< j \leq \ell$, let $\tildh_{i,j}$ denote the $\ell-2$
  order Wronskian like \eqref{eq:tildhdef}, but with $h(k_i,x)$ and
  $h(k_j,x)$ omitted.  Since
  \[ \eta_{\ell-1} = \tildh_\ell,\]  we have by \eqref{eq:Bj}, \eqref{eq:wronsk1} and \eqref{eq:wronsk2},
  \begin{align*}
    B_\ell(e^{x^2} \tildh_j) &= \frac{e^{x^2}}{\eta_\ell}
    \Wrop{\tildh_\ell, \tildh_j}= -e^{x^2} \tildh_{j,\ell}.
  \end{align*}
  By the inductive hypothesis,
  \begin{multline*}
    (B_1 \circ \cdots \circ B_{\ell-1}) \left(e^{x^2} \tildh_{j,\ell}\right) \\
    =
        (-1)^{[(\ell-1)/2]} e^{x^2} \eta_{\ell-1}^{-(\ell-1)}
    \Wrop{\tildh_{1,\ell}, \cdots, \tildh_{\ell-1,\ell}, \tildh_{j,\ell}} =
    0, 
    \qquad    j=1,\ldots, \ell-1,
    \end{multline*}
and hence
\begin{align*}
    (B_1 \circ \cdots \circ B_{\ell-1} \circ B_\ell)\left(e^{x^2} \tildh_{j}\right) = 0, \qquad j =1, \ldots, \ell.
  \end{align*}
The last identity also holds for $j= \ell$, since $B_{\ell}(e^{x^2} \tildh_\ell) = 0$.
  
By \eqref{eq:Bydef} we also have
\begin{align*}
    \cB\left(e^{x^2} \tildh_{j}\right) = 0, \qquad j =1, \ldots, \ell.
  \end{align*}
Thus $B_1 \circ \cdots B_{\ell}$ and $\cB$ are two $\ell$-th order linear differential operators that agree for  $\ell$ linearly independent functions
and hence they agree up to a   multiplicative left factor.
  Directly from the definition \eqref{eq:Bj}, we have
  \[ (B_1\circ \cdots \circ B_{\ell})(y) = \frac{1}{\eta_{\ell}}
  y^{(\ell)} + \text{ terms with lower order derivatives.} \] By \eqref{eq:Bydef}, \eqref{eq:wronsk1} and
  \eqref{eq:wronsk3}, we have
  \begin{align*}
    \cB(y) & = (-1)^{\lfloor
      \ell/2\rfloor}\frac{\Wrop{ e^{x^2}\tildh_{1},\ldots,
      e^{x^2}\tildh_{\ell}, y} }{\left( e^{x^2} \eta_\ell\right)^\ell} \\
    & = (-1)^{\lfloor \ell/2\rfloor}\frac{\Wrop{ \tildh_{1},\ldots,
      \tildh_{\ell}} }{\left( 
      \eta_\ell\right)^\ell}  y^{(\ell)} + \text{ terms with lower order derivatives}\\
   & = \frac{1}{\eta_\ell} y^{(\ell)} + \text{ terms with lower order derivatives}.
  \end{align*}
  This establishes \eqref{eq:Bfactor}.
\end{proof}

Next, let
\begin{align*}
  T_j(y)&:=y''-2\left(x+\frac{\eta_j'}{\eta_j} \right) y'+
  \left(\frac{\eta_j''}{\eta_j}+ 2 x \frac{\eta_j'}{\eta_j} \right) y,
\end{align*}
denote the exceptional operators
corresponding to the truncated partition $\lambda^{(j)}$.  Thus $T_0=T$
is the classical Hermite operator, while $T_\ell = \hT$, see \eqref{eqn:hatT}.

\begin{prop}
  \label{prop:ratfac} 
   With $A_j, B_j, T_j$ defined as above, the following intertwining
  relations hold:
  \begin{align}
    \label{eq:Aintertwine}
    A_j \circ T_{j-1}  &=  (T_j -2)\circ A_j, \\
    \label{eq:Bintertwine}
    T_{j-1}\circ B_j &=  B_j\circ (T_j -2).
    \end{align}
\end{prop}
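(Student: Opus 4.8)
The plan is to deduce both intertwining relations from a pair of operator factorizations
\[ B_j\circ A_j = T_{j-1} + 2\lambda_j, \qquad A_j\circ B_j = T_j + 2\lambda_j - 2, \]
after which \eqref{eq:Aintertwine} and \eqref{eq:Bintertwine} follow by pure associativity. Indeed, writing $T_{j-1}=B_j\circ A_j-2\lambda_j$ and $T_j=A_j\circ B_j-(2\lambda_j-2)$ and composing,
\[ A_j\circ T_{j-1}=(A_j\circ B_j)\circ A_j-2\lambda_j A_j=(T_j+2\lambda_j-2)\circ A_j-2\lambda_j A_j=(T_j-2)\circ A_j, \]
and symmetrically $T_{j-1}\circ B_j=B_j\circ(A_j\circ B_j)-2\lambda_j B_j=B_j\circ(T_j-2)$. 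So everything reduces to establishing the two factorizations.

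To prove the first factorization I would expand $B_j\circ A_j$ using the first-order forms \eqref{eq:Aj} and \eqref{eq:Bj}, writing $A_j=\tfrac{\eta_j}{\eta_{j-1}}\bigl(\partial-\tfrac{\eta_j'}{\eta_j}\bigr)$ and $B_j=\tfrac{\eta_{j-1}}{\eta_j}\bigl(\partial-2x-\tfrac{\eta_{j-1}'}{\eta_{j-1}}\bigr)$. Since the two leading coefficients multiply to $1$, the composite is a monic second-order operator, and a direct computation shows that its coefficient of $\partial$ equals $-2\bigl(x+\tfrac{\eta_{j-1}'}{\eta_{j-1}}\bigr)$, matching the first-order coefficient of $T_{j-1}$ exactly. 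Thus $B_j\circ A_j-T_{j-1}$ is an operator of order zero, i.e. multiplication by a single function, and the problem collapses to identifying that function with the constant $2\lambda_j$. The same bookkeeping applied to $A_j\circ B_j$ (whose leading coefficient is again $1$ and whose first-order coefficient reproduces that of $T_j$) reduces the second factorization to showing that $A_j\circ B_j-T_j$ is the constant $2\lambda_j-2$.

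The remaining scalar identities are the crux and the main obstacle. Applying $B_j\circ A_j$ to $\eta_j$ gives $0$, because $A_j(\eta_j)=\Wrop{\eta_j,\eta_j}/\eta_{j-1}=0$; hence the first factorization is equivalent to the eigenvalue equation $T_{j-1}(\eta_j)=-2\lambda_j\,\eta_j$, and likewise, since $B_j(e^{x^2}\eta_{j-1})=0$, the second is equivalent to $T_j\bigl(e^{x^2}\eta_{j-1}\bigr)=(2-2\lambda_j)\,e^{x^2}\eta_{j-1}$. For $j=1$ (where $\eta_0=1$) the first of these is literally the Hermite differential equation \eqref{eqn:Teigen}; in general it is a bilinear identity for the consecutive Hermite Wronskians $\eta_{j-1},\eta_j$ of \eqref{eq:etaj}, which I would prove from the Wronskian identities \eqref{eq:wronsk1}--\eqref{eq:wronsk3} together with \eqref{eqn:Teigen}, as in the gauge/Crum picture: recognizing $e^{-x^2/2}\eta_j/\eta_{j-1}$ as a Darboux--Crum eigenfunction of the Schr\"odinger operator obtained by gauging $T_{j-1}$ with its self-adjoint weight $e^{-x^2}/\eta_{j-1}^2$.

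Once constancy of $B_j\circ A_j-T_{j-1}$ is known, its value is pinned down without circularity by letting $x\to\infty$: in the zeroth-order coefficient all terms decay except the surviving contribution $2x\bigl(\tfrac{\eta_j'}{\eta_j}-\tfrac{\eta_{j-1}'}{\eta_{j-1}}\bigr)\to 2(\deg\eta_j-\deg\eta_{j-1})$, and by \eqref{eq:kidef} and the Wronskian degree count $\deg\eta_j-\deg\eta_{j-1}=\lambda_j$, which yields the constants $2\lambda_j$ and $2\lambda_j-2$. I expect the bilinear Wronskian identity, equivalently the eigenfunction property of these Wronskian ratios, to be the genuinely nontrivial step, the coefficient bookkeeping being routine.
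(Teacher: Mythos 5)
Your proposal is correct and takes essentially the same route as the paper: the paper likewise reduces both intertwining relations, by pure associativity, to the pair of factorizations \eqref{eq:TBAfac}--\eqref{eq:TABfac}, namely $T_{j-1} = B_j\circ A_j + 2(j-k_j-1)$ and $T_j = A_j\circ B_j + 2(j-k_j)$, which are exactly your two identities since $k_j = \lambda_j + j - 1$. The crux is handled identically as well --- the paper proves these factorizations via the gauge/Crum picture you invoke, conjugating $T_j$ by $e^{-x^2/2}/\eta_j$ to a Schr\"odinger operator and applying Crum's theorem to the eigenfunction $e^{-x^2/2}\eta_j/\eta_{j-1}$ --- the only (cosmetic) difference being that the paper reads the additive constants directly off the Crum eigenvalue $2k_j+1$, whereas you pin them down by the $x\to\infty$ asymptotics of the zeroth-order coefficient.
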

\begin{proof}
For $j=1, \ldots, \ell$, set 
$\psi_j(x) = e^{-x^2/2} h(k_j,x)$
  and observe that
 $\Wrop{\psi_1(x),\ldots, \psi_j(x)}= e^{-j x^2/2} \eta_j(x)$.
  Next, set 
 $\cH_j = -\partial_x^2 + U_j$,
  where 
  \[ U_j= x^2 - 2\partial_x^2 \log \eta_j(x) +2j.\]
The operator $\cH_j$ is such that 
  \begin{equation} \label{eq:HjandTj}
     \left(\frac{e^{-x^2/2}}{\eta_j}\right)^{-1} \circ
     \cH_j \circ \left( \frac{e^{-x^2/2}}{\eta_j}\right) =
     -T_{j}+2j +3.  \end{equation}
       
  
  Since
  \[(-\partial_x^2+x^2) \psi_j = (2k_j+1) \psi_j,\] Crum's Theorem
  \cite{crum} implies that
  \[ \cH_{j-1}\left( \frac{\Wrop{\psi_1,\ldots,
      \psi_j}}{\Wrop{\psi_1,\ldots, \psi_{j-1}}}\right) =
  (2k_{j}+1) \frac{\Wrop{\psi_1,\ldots,
    \psi_j}}{\Wrop{\psi_1,\ldots, \psi_{j-1}}}.
  \]
  Equivalently, in view of \eqref{eq:etaj},
  \[ \cH_{j-1}\left(\frac{e^{-x^2/2} \eta_j}{\eta_{j-1}}\right) =
  (2k_{j}+1)\frac{e^{-x^2/2} \eta_j}{\eta_{j-1}}. \] 
  This gives  the factorizations 
  \begin{align*}
    \cH_{j-1} &= \left(-\partial_x +x -\frac{\eta_j'}{\eta_j} +
      \frac{\eta_{j-1}'}{\eta_{j-1}}\right)\circ \left(\partial_x +x
      -\frac{\eta_j'}{\eta_j} + \frac{\eta_{j-1}'}{\eta_{j-1}}\right)
    + 2k_j+1,\\
    \cH_{j} &= \left(\partial_x +x -\frac{\eta_j'}{\eta_j} +
      \frac{\eta_{j-1}'}{\eta_{j-1}}\right) \circ\left(-\partial_x +x
      -\frac{\eta_j'}{\eta_j} + \frac{\eta_{j-1}'}{\eta_{j-1}}\right)
    + 2k_j+1.
  \end{align*}
  Hence $\cH_j$ is obtained from $\cH_{j-1}$ by a Darboux transformation.
  Straightforward calculations show that
  \begin{align*}
  	A_j & = \frac{\eta_j}{\eta_{j-1}}
  	    \left(\partial_x -  \frac{\eta_j'}{\eta_j} \right) \\
  	    & =  
    \left(\frac{e^{-x^2/2}}{\eta_{j}}\right)^{-1}  
    \circ\left(\partial_x +x -\frac{\eta_j'}{\eta_j} +
      \frac{\eta_{j-1}'}{\eta_{j-1}}\right)\circ \left(
      \frac{e^{-x^2/2}}{\eta_{j-1}}\right), \\
  - B_j &  =\frac{\eta_{j-1}}{\eta_j} \left(-\partial_x +2x+
        \frac{\eta_{j-1}'}{\eta_{j-1}}\right) \\
        & =  \left(\frac{e^{-x^2/2}}{\eta_{j-1}}\right)^{-1}
     \circ\left(-\partial_x +x -\frac{\eta_j'}{\eta_j} +
      \frac{\eta_{j-1}'}{\eta_{j-1}}\right) \circ
    \left(\frac{e^{-x^2/2}}{\eta_{j}}\right).
  \end{align*}  
  Thus, for $1\leq j\leq \ell$,   \begin{align}
    \label{eq:TBAfac}
    T_{j-1} &= B_j \circ A_j+2(j-k_j-1),\\
    \label{eq:TABfac}
    T_j &= A_j\circ B_j+2(j-k_j).
  \end{align}
  Multiplying \eqref{eq:TBAfac}  on the left and \eqref{eq:TABfac} on the right by $A_j$ and equating the terms equal to $A_j\circ B_j\circ A_j$ yields the desired equalities.
\end{proof}

The operators $\cA\circ\cB$ and $\cB\circ\cA$ are polynomials in the operators $\hT$ and $T$ respectively:

\begin{lemma} \label{lem:factor}The products of the operators $\cA$
  and $\cB$ defined in \eqref{eq:Aydef} and \eqref{eq:Bydef} factor
  as:
  \begin{align}
    \label{eq:BApoly}
    \cB\circ\cA&=(T+2k_1)\circ\cdots\circ (T+2k_\ell), \\
    \label{eq:ABpoly}
    \cA\circ \cB&=(\hat T-2\ell+2k_1)\circ \cdots \circ (\hat T-2\ell + 2
    k_{\ell}).
  \end{align}
\end{lemma}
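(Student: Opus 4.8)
The plan is to assemble the two products out of their linear factors from Proposition~\ref{prop:fact}, namely $\cA=A_\ell\circ\cdots\circ A_1$ and $\cB=B_1\circ\cdots\circ B_\ell$, using the two families of relations established in the proof of Proposition~\ref{prop:ratfac}. I will need both the collapse identities \eqref{eq:TBAfac}--\eqref{eq:TABfac}, which I rewrite as $B_j\circ A_j=T_{j-1}+2(k_j+1-j)$ and $A_j\circ B_j=T_j+2(k_j-j)$, and the intertwiners \eqref{eq:Aintertwine}--\eqref{eq:Bintertwine}.

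First I would iterate the intertwiners to move the classical operator $T=T_0$ through an entire string of factors. Each application of \eqref{eq:Aintertwine} raises the index of $T_j$ by one while shifting the eigenvalue by $-2$, so carrying $T$ through $A_\ell\circ\cdots\circ A_1$ yields $\cA\circ T=(\hT-2\ell)\circ\cA$; likewise \eqref{eq:Bintertwine} gives $T\circ\cB=\cB\circ(\hT-2\ell)$. Consequently $\cA\circ p(T)=p(\hT-2\ell)\circ\cA$ for every polynomial $p$, a \emph{global} intertwiner I will exploit below.

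Next I would prove \eqref{eq:BApoly} by induction on $\ell$, equivalently on the truncated partitions $\lambda^{(j)}$. For $\ell=1$, \eqref{eq:TBAfac} reads $\cB\circ\cA=B_1\circ A_1=T+2k_1$. For the inductive step I collapse the innermost adjacent pair via $B_\ell\circ A_\ell=T_{\ell-1}+2(k_\ell+1-\ell)$ and then push the resulting $T_{\ell-1}$ rightward through $A_{\ell-1}\circ\cdots\circ A_1$ by the intertwiners; the index descends back to $T_0$ while accumulating a shift $+2(\ell-1)$, and the two shifts combine into a trailing factor $T+2k_\ell$. What remains on the left is $(B_1\circ\cdots\circ B_{\ell-1})\circ(A_{\ell-1}\circ\cdots\circ A_1)$, which is exactly $\cB\circ\cA$ for $\lambda^{(\ell-1)}$, since the operators $A_j,B_j$ with $j\le\ell-1$ depend only on $\eta_1,\dots,\eta_{\ell-1}$. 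The induction hypothesis, together with the fact that all the factors $T+2k_i$ commute as polynomials in $T$, then gives \eqref{eq:BApoly}.

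The product $\cA\circ\cB$ is the delicate case and the main obstacle: its only adjacent collapsible pair is $A_1\circ B_1$, and peeling it off leaves operators indexed $2,\dots,\ell$, which are not the factors attached to any truncated partition, so the naive induction breaks down. I would instead argue by cancellation. Writing $P(t)=\prod_{i=1}^{\ell}(t+2k_i)$, the already-proved \eqref{eq:BApoly} combined with the global intertwiner gives $(\cA\circ\cB)\circ\cA=\cA\circ(\cB\circ\cA)=\cA\circ P(T)=P(\hT-2\ell)\circ\cA$, so that $\bigl[\cA\circ\cB-P(\hT-2\ell)\bigr]\circ\cA=0$. Since $\cA\ne 0$ and linear differential operators with meromorphic coefficients form a ring without zero divisors (the order of a composition is the sum of the orders), I may cancel $\cA$ on the right to conclude $\cA\circ\cB=P(\hT-2\ell)=\prod_{i=1}^{\ell}(\hT-2\ell+2k_i)$, which is \eqref{eq:ABpoly}.
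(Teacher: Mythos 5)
Your proof of \eqref{eq:BApoly} is essentially the paper's own proof: the paper inducts on the partial products $B_1\circ\cdots\circ B_n\circ A_n\circ\cdots\circ A_1$, collapsing the innermost pair $B_{n+1}\circ A_{n+1}$ with \eqref{eq:TBAfac} and pushing the resulting $T_n$-term rightward through the $A_j$'s via \eqref{eq:Aintertwine} until it exits as the trailing factor $T+2k_{n+1}$, exactly as you do. For \eqref{eq:ABpoly} you genuinely diverge, and your route is correct. The paper disposes of \eqref{eq:ABpoly} ``similarly,'' by the mirror induction: start from $A_\ell\circ B_\ell=\hT-2\ell+2k_\ell$ (this is \eqref{eq:TABfac} with $j=\ell$), insert the pair $A_m\circ B_m$ in the middle at each step, collapse it to $T_m+2(k_m-m)$, and push that term rightward through $B_{m+1},\ldots,B_\ell$ via \eqref{eq:Bintertwine}, where it exits as $\hT-2\ell+2k_m$. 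Note that this induction, like \eqref{eqn:inducthyp} itself, is a statement about partial products of the factors $A_j,B_j$ attached to the \emph{fixed} partition; it never asks the intermediate objects to be the $\cA\circ\cB$ of a truncated partition. So your diagnosis that ``the naive induction breaks down'' is overstated: it defeats only an induction on truncated partitions, which is not the induction the paper runs. Your replacement argument is nonetheless valid and arguably slicker: iterating \eqref{eq:Aintertwine} gives the global intertwiner $\cA\circ p(T)=p(\hT-2\ell)\circ\cA$ for every polynomial $p$; associativity and the already-proved \eqref{eq:BApoly} then give $\bigl(\cA\circ\cB-P(\hT-2\ell)\bigr)\circ\cA=0$ with $P(t)=\prod_{i=1}^{\ell}(t+2k_i)$; and since all operators in sight lie in $\C(x)[\partial]$, where orders add and leading coefficients multiply under composition (hence no zero divisors), the nonzero factor $\cA$ cancels on the right. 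What your route buys: \eqref{eq:ABpoly} follows from \eqref{eq:BApoly} with no second induction and no bookkeeping of shifts, and the same trick works whenever a one-sided intertwining relation and one of the two factorizations are available. What the paper's route buys: both identities come from one uniform mechanism using only the local relations of Proposition~\ref{prop:ratfac}, staying entirely inside the factorization calculus without appealing to the ring-theoretic structure of $\C(x)[\partial]$.
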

\begin{proof}
It follows from \eqref{eq:TBAfac} that
$$
B_1\circ A_1=T_0-2k_1 = T-2k_1.
$$

Now, suppose that for some $1\leq n<\ell$ that 
\begin{equation}
B_1\circ B_{2}\circ \cdots \circ B_n\circ A_n\circ A_{n-1}\circ\cdots\circ A_1=(T+2k_1)\circ (T+2k_2)\circ \cdots\circ (T+2k_n).\label{eqn:inducthyp}
\end{equation}
Consider the left-hand side of \eqref{eqn:inducthyp} with $n$ replaced by $n+1$.  Using  \eqref{eq:TBAfac} once to replace $B_{n+1}\circ A_{n+1}$ and following repeated use of \eqref{eq:Aintertwine} to move the term involving $T_i$ all the way to the right we obtain 
\begin{align*}
&B_1\circ\cdots\circ B_{n}\circ(B_{n+1}\circ A_{n+1})\circ A_{n}\circ\cdots \circ A_1\\
  &=B_1\circ\cdots\circ B_{n}\circ (T_{n}-2(n-k_{n+1}))\circ A_{n}\circ\cdots \circ A_1\\
  &=B_1\circ\cdots\circ B_{n}\circ A_{n}\circ (T_{n-1}-2(n-k_{n+1}-1))\circ \cdots \circ A_1\\
  &\qquad\qquad\qquad\qquad\vdots\\
  &=B_1\circ\cdots\circ B_{n}\circ A_{n}\circ \cdots \circ
  A_1\circ (T_{0}+2k_{n+1})\\
  &=(T+2k_1)\circ (T+2k_2)\circ \cdots\circ (T+2k_{n+1})
\end{align*}
where the last equality is obtained by making use of the assumption \eqref{eqn:inducthyp} and the fact that $T_0 = T$.  

Then by induction we find that \eqref{eqn:inducthyp} is true when $n=\ell$.  Since the left-hand side in that case is equal to $B\circ A$ by Proposition~\ref{prop:fact}, this proves \eqref{eq:BApoly}.

  Factorization  \eqref{eq:ABpoly} can be similarly obtained by repeated use of \eqref{eq:Bintertwine} and \eqref{eq:TABfac}.
\end{proof}

Combining Lemma~\ref{lem:factor} with the eigenvalue equations \eqref{eqn:Teigen} and \eqref{eq:XDE}  leads immediately to:
\begin{corollary}
  \label{cor:BApi}
  The operators $\cA\circ\cB$ and $\cB\circ\cA$ satisfy the eigenvalue
  equations:
  \begin{align} \label{eq:BAhn}
    (\cB\circ\cA)h(n,x) & =\pi(n) h(n,x), \\
				    \label{eq:ABhhn}
     (\cA\circ \cB)\hh(n,x)& =\pi(n-N+\ell)\hh(n,x).
  \end{align}
  where
  \begin{equation}
    \label{eq:pindef}
    \pi(n)    =\prod_{i=1}^\ell (-2n+2k_i) .
  \end{equation}
\end{corollary}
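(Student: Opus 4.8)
The plan is to read the result directly off the operator factorizations established in Lemma~\ref{lem:factor}, reducing the entire argument to scalar arithmetic once each operator is applied to its respective eigenfunction. Since the substantive work---expressing $\cB\circ\cA$ and $\cA\circ\cB$ as polynomials in the single operators $T$ and $\hT$---has already been carried out, no further operator manipulations are required, and the corollary genuinely follows ``immediately.''

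First I would apply the factorization \eqref{eq:BApoly} to $h(n,x)$. Because every factor $(T+2k_i)$ is a polynomial in the single operator $T$, the factors mutually commute, and each acts on $h(n,x)$ by scalar multiplication via the Hermite eigenvalue equation \eqref{eqn:Teigen}, which gives $T\,h(n,x)=-2n\,h(n,x)$ and hence $(T+2k_i)h(n,x)=(-2n+2k_i)h(n,x)$. Applying the factors successively therefore multiplies $h(n,x)$ by $\prod_{i=1}^\ell(-2n+2k_i)$, which is exactly $\pi(n)$ as defined in \eqref{eq:pindef}. This establishes \eqref{eq:BAhn}.

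The second identity \eqref{eq:ABhhn} follows by the same mechanism applied to \eqref{eq:ABpoly}, now with $\hh(n,x)$ in place of $h(n,x)$ and the exceptional eigenvalue equation \eqref{eq:XDE}, namely $\hT\,\hh(n,x)=2(N-n)\hh(n,x)$. Each factor then acts as $(\hT-2\ell+2k_i)\hh(n,x)=(2N-2n-2\ell+2k_i)\hh(n,x)$, and taking the product over $i$ yields the scalar $\prod_{i=1}^\ell(2N-2n-2\ell+2k_i)$. The only point demanding any attention is the bookkeeping of the argument of $\pi$: rewriting $2N-2n-2\ell+2k_i=-2(n-N+\ell)+2k_i$ shows that this product is precisely $\pi(n-N+\ell)$, matching the claimed eigenvalue. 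This index shift is the single place where one could slip, but it is a routine substitution rather than a real obstacle; with the two factorizations of Lemma~\ref{lem:factor} in hand, the commutativity of the factors and the two eigenvalue equations make the corollary fall out in one line each.
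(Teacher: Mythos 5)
Your proof is correct and follows exactly the route the paper intends: the paper states that the corollary follows ``immediately'' from Lemma~\ref{lem:factor} together with the eigenvalue equations \eqref{eqn:Teigen} and \eqref{eq:XDE}, and your argument simply fills in those one-line computations, including the correct index shift giving $\pi(n-N+\ell)$.
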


\section{The Stabilizer and Recurrence Relations}

\begin{definition}\label{def:StabRing} Let us define the stabilizer
  ring $\StabRing\subset\C[x]$ by
\[
\StabRing=\left\{\stabf\in\C[x]\ : \stabf(x)\hh(n,x)\in
  \cU\hbox{ for all }n\right\}.
\]
\end{definition}
\begin{remark}
  By the orthogonality properties of the polynomials $\hh(n,x)$ it is
  possible to show that if $\stabf\in \StabRing$, then for all
  $n\in \Ilam$, necessarily
  \[ f(x) \hh(n,x) =\sum_{j=-\deg \stabf}^{\deg\stabf}\gamma_{j,n}\hat
  h(n+j,x) \] for some $\gamma_{j,n}\in \C$.
\end{remark}

It was already observed in \cite{Sasaki2010} that $\eta^2
(x)\in\StabRing$ and more recently in
\cite{Miki2015,Duran2015a,Odake2015} that $\int^x \eta\in\StabRing$ as
well.  The following proposition provides a characterization of
$\StabRing$.
\begin{prop}\label{prop:stab}
  If $f(x)\in\C[x]$ and $f'(x)$ is divisible by $\eta(x)$, then $f\in\StabRing$.  The
converse is also true, provided $\eta(x)$ has only simple roots.
\end{prop}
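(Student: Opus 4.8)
The plan is to reduce membership in $\StabRing$ to a regularity condition and then read off the divisibility by $\eta$ from a single commutator computation with the operator $\hT$. The starting point is that $\cU=\operatorname{Im}\cA$: since $\cA$ annihilates each $h(k_i,x)$, the polynomials $\hh(n,x)=\cA(h(n+\ell-N,x))$, $n\in\Ilam$, exhaust the image of $\cA$ on $\C[x]$, so $\stabf\in\StabRing$ precisely when multiplication by $\stabf$ preserves $\operatorname{Im}\cA$. I would then isolate a \emph{characterization lemma}: a polynomial $q$ lies in $\cU$ if and only if $\hT(q)$ is again a polynomial (equivalently $\cB(q)\in\C[x]$). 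The ``only if'' direction is immediate, since by \eqref{eq:XDE} the $\hh(n,x)$ are polynomial eigenfunctions of $\hT$ and $\cU$ is $\hT$-invariant (equivalently $\cB(\cU)=\operatorname{span}\{h(m,x):m\notin K\}$).

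Granting the lemma, both implications follow from one identity. For $p\in\cU$, formula \eqref{eqn:hatT} gives
\[
\hT(fp)-f\,\hT(p)=2f'p'+\bigl(f''-2xf'\bigr)p-2\,\frac{\eta'}{\eta}\,f'p .
\]
Every term on the right is a polynomial except possibly the last, and if $\eta\mid f'$ then $\tfrac{\eta'}{\eta}f'=\eta'\,(f'/\eta)$ is polynomial too. Since $\hT(p)\in\cU\subseteq\C[x]$, the left side's companion $f\,\hT(p)$ is polynomial, so $\hT(fp)\in\C[x]$, and the lemma yields $fp\in\cU$; as $p$ was arbitrary, $f\in\StabRing$. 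Conversely, if $f\in\StabRing$ then $fp\in\cU$ for all $p\in\cU$, so $\hT(fp)\in\C[x]$ by the easy half of the lemma, and the displayed identity forces $\eta\mid\eta' f' p$ for every $p\in\cU$. At a simple root $x_0$ of $\eta$ one has $\eta'(x_0)\neq0$; moreover the subleading coefficient of the Wronskian operator $\cA$ equals $\pm\eta'$, so $y\mapsto\cA(y)(x_0)$ is a nonzero functional and some $p\in\cU$ does not vanish at $x_0$. Then $\eta\mid\eta' f' p$ forces $f'(x_0)=0$, and ranging over the (distinct, simple) roots of $\eta$ gives $\eta\mid f'$.

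The main obstacle is the nontrivial half of the characterization lemma: that $\hT(q)\in\C[x]$ (equivalently, that $q$ satisfies the residue conditions at the zeros of $\eta$) already forces $q\in\cU$. I would argue this in the $\cB$-formulation. First, $\cB$ is injective on $\C[x]$: by \eqref{eq:Bydef}, $\cB(q)=0$ forces $e^{-x^2}q\in\operatorname{span}\{\tildh_1,\dots,\tildh_\ell\}$, which has no nonzero polynomial solution. Second, if $\cB(q)\in\C[x]$ I must show it has no component along any $h(k_i,x)$, i.e. that $\langle\cB(q),h(k_i,x)\rangle=0$ in the classical ($e^{-x^2}$) Hermite inner product. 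Here I would use that $\cA$ and $\cB$ are formal adjoints with respect to the weights $e^{-x^2}$ and $e^{-x^2}/\eta^2$: the associated Lagrange identity pairs $\langle\cB(q),h(k_i,x)\rangle$ against $\langle q,\cA(h(k_i,x))\rangle_{\hat W}$, and the latter vanishes because $\cA(h(k_i,x))=0$ identically. Since $\cB(q)$ is assumed polynomial, the left pairing is a genuine convergent integral, so $\cB(q)\in\operatorname{span}\{h(m,x):m\notin K\}$ and injectivity of $\cB$ then identifies $q$ with the unique element of $\cU$ with that image. The delicate part will be controlling the bilinear concomitant at the zeros of $\eta$; the saving feature is that one argument of the pairing, $\cA(h(k_i,x))$, is literally the zero function, which lets the identity survive even when $\hat W$ is not integrable. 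This regularity-implies-eigenfunction step is the crux; the two implications of the proposition are then the short computations above.
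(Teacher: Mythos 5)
Your converse direction is complete and correct, but your forward direction has a genuine gap at the junction between your characterization lemma and its use. You apply the lemma in the form ``$\hT(q)\in\C[x]\Rightarrow q\in\cU$'' (you show $\hT(fp)\in\C[x]$ and conclude $fp\in\cU$), but the hard half you actually prove --- via injectivity of $\cB$, the identity $\cB(\cU)=\operatorname{span}\{h(m,x):m\notin K\}$, and the Lagrange/adjointness pairing --- is the statement ``$\cB(q)\in\C[x]\Rightarrow q\in\cU$.'' The parenthetical ``equivalently $\cB(q)\in\C[x]$'' carries the entire weight of the argument and is never justified: for a polynomial $q$ not yet known to lie in $\cU$, there is no evident passage from regularity of $\hT(q)$ (a second-order condition) to regularity of $\cB(q)$ (an order-$\ell$ condition). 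In fact, granting your $\cB$-lemma, the missing bridge ``$\hT(q)\in\C[x]\Rightarrow\cB(q)\in\C[x]$'' is \emph{equivalent} to the hard step ``$\hT(q)\in\C[x]\Rightarrow q\in\cU$'' itself, so the adjointness machinery, as deployed, does not close the argument. The paper closes exactly this step by a dimension count instead: divisibility of the numerator \eqref{eq:Tlambdasing} by $\eta$ imposes $N=\deg\eta$ independent linear conditions on $q\in\C[x]$, while $\cU\subseteq\{q:\hT(q)\in\C[x]\}$ has codimension exactly $N$, so the two spaces coincide. To repair your proof you must either import that dimension-counting argument (which then makes the $\cB$-apparatus unnecessary), or rework the forward implication so that it produces $\cB$-regularity of $fp$ directly --- and your displayed commutator identity for $\hT$ does not give that.

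The rest of the proposal is sound, and in one place nicer than the paper. Your commutator identity $\hT(fp)-f\hT(p)=2f'p'+(f''-2xf')p-2\frac{\eta'}{\eta}f'p$ is exactly the paper's numerator identity, and your converse uses only the easy half of the lemma ($q\in\cU\Rightarrow\hT(q)\in\C[x]$, immediate from \eqref{eq:XDE}), so it is self-contained; moreover, your observation that the subleading coefficient of the Wronskian operator $\cA$ is $\pm\eta'$, so that $y\mapsto\cA(y)(x_0)$ is a nonzero functional with values in $\cU$ and hence some $p\in\cU$ has $p(x_0)\neq 0$, replaces the paper's appeal to Proposition 5.5 of \cite{Gomez-Ullate2014a} with an elementary argument. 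The $\cB$-lemma you sketch is also believable as stated: since $\cB(q)$ is assumed polynomial and $\cA(h(k_i,x))\equiv 0$, the bilinear concomitant has globally smooth derivative and therefore no poles at the (possibly real) zeros of $\eta$ or of the intermediate Wronskians, so the pairing argument survives non-integrability of $\hat W$. That would establish $\cU=\{q\in\C[x]:\cB(q)\in\C[x]\}$, a true and interesting characterization --- but it is the wrong regularity class for the implication your forward direction needs.
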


\begin{proof} 
We first show that $p\in
  \C[x]$ belongs to $\cU$ if and only if
  \begin{equation}
    \label{eq:Tlambdasing} 2 \eta'(x)(xp(x) -p'(x)) +\eta''(x) p(x)
  \end{equation} is divisible by $\eta$. Indeed, the operator  \eqref{eqn:hatT}  transforms every element  $p\in\cU$
  into a polynomial, so the numerator of the singular part of $\hat T(p)$ must be divisible by $\eta$: this is precisely \eqref{eq:Tlambdasing}.
Conversely, divisibility  of \eqref{eq:Tlambdasing} by $\eta$ imposes precisely $\deg \eta = N$  independent linear conditions on $p\in \C[x]$.
  Since $N$ is also the codimension of $\cU$ it
  follows by dimensional exhaustion that a polynomial $p\in \C[x]$ that
  satisfies these conditions is necessarily an element of
  $\cU$.  
  
  Now suppose that $f'(x)$ is divisible by $\eta(x)$. Let $p\in
  \cU$ and set $q=fp$.  Then,
  \begin{align*}
    &2 \eta'(x)(xq(x) -q'(x)) +\eta''(x) q(x) = \\
    &\qquad     f(x)(2 \eta'(x)(x p(x) -p'(x)) +\eta''(x) p(x))  - 2 \eta'(x) p(x) f'(x)
  \end{align*}
  is divisible by $\eta$, and hence is also in $\cU$.

  Conversely, suppose that $\eta(x)$ has simple zeros, and that the
  above expression is divisible by $\eta(x)$ for all $p \in
  \cU$.  Also, $p(x) f'(x)$ is divisible by $\eta(x)$ for all
  $p\in \cU$.  By Proposition 5.5 in \cite{Gomez-Ullate2014a},
  the polynomials in $\cU$ do not have a common root, which
  implies that $f'(x)$ must be divisible by $\eta(x)$.
\end{proof}

Note that as a consequence of Proposition~\ref{prop:stab} the ring
$\StabRing$ is not only non-empty but moreover guaranteed to have
elements of every sufficiently high degree.  Hence if $\eta(x)$ has
simple zeros, then $\int^x \eta\in\StabRing$ is the element in
$\StabRing$ of minimal degree.\footnote{The minimal degree of an element of $\StabRing$ and thus the minimal order of the recurrence relation for the exceptional Hermite polynomials was conjectured to be $2N+3$ in the previous works \cite{Miki2015,Duran2015a,Odake2015}. We see that this conjecture on the minimal order relies on Veselov's conjecture on the simplicity of the zeros of the Wronskian of Hermite polynomials \cite{Felder2012}.  }

The elements of $\StabRing$ can also be characterized as precisely
those polynomials in $\C[x]$ that act as linear endomorphisms on
$\cU$.   From   \cite{Gomez-Ullate2014a} we have
\begin{lemma}
  The operators $A,B$ defined in \eqref{eq:Aydef} and \eqref{eq:Bydef}
  act as linear transformations $A:\C[x] \to \cU$ and $B:\cU
  \to\C[x]$, respectively.
\end{lemma}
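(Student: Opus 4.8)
The plan is to verify the two mapping properties separately. Both rest on the elementary fact that $\{h(m,x):m\in\N_0\}$ is a basis of $\C[x]$ (since $\deg h(m,x)=m$), together with the factorization $\cB\circ\cA=(T+2k_1)\circ\cdots\circ(T+2k_\ell)$ supplied by Lemma~\ref{lem:factor}. I would treat $\cA$ first, and then leverage the surjectivity that the $\cA$ computation reveals to handle $\cB$.

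For $\cA:\C[x]\to\cU$, since $\cA$ is linear it suffices to evaluate it on each basis element $h(m,x)$. By \eqref{eq:Aydef} one has $\cA(h(m,x))=\Wrop{h(k_1,x),\ldots,h(k_\ell,x),h(m,x)}$, which is automatically a polynomial. If $m\in K$ the Wronskian has a repeated argument and vanishes, so $\cA(h(m,x))=0\in\cU$. If $m\notin K$, put $n=m+N-\ell$; then $\cA(h(m,x))=\cA(h(n+\ell-N,x))=\hh(n,x)$ by the definition of $\hh$. One checks from \eqref{eq:Ilamdef} that $n\in\Ilam$: since every $\lambda_i\ge 1$ we have $N\ge\ell$, so $n\ge N-\ell\ge 0$, and $n+\ell-N=m\notin K$ is exactly the hypothesis of this case. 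Hence $\cA(h(m,x))\in\cU$ in all cases, and by linearity $\cA(\C[x])\subseteq\cU$. This argument in fact shows $\cU=\cA(\C[x])$, which is the observation I exploit next.

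For $\cB:\cU\to\C[x]$, I would write an arbitrary $u\in\cU$ as a finite sum $u=\sum_n c_n\hh(n,x)=\cA\bigl(\sum_n c_n\,h(n+\ell-N,x)\bigr)=\cA(p)$ with $p\in\C[x]$. Then $\cB(u)=(\cB\circ\cA)(p)$, and by Lemma~\ref{lem:factor} this equals $(T+2k_1)\circ\cdots\circ(T+2k_\ell)$ applied to $p$. Since the classical Hermite operator $T(y)=y''-2xy'$ maps $\C[x]$ into itself, so does each factor $T+2k_i$, and hence their composite; therefore $\cB(u)\in\C[x]$, proving $\cB(\cU)\subseteq\C[x]$.

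The genuinely delicate step is this last one. Directly from \eqref{eq:Bydef} the operator $\cB$ has rational coefficients with poles at the zeros of $\eta$, so a priori it sends a generic polynomial to a rational function, and it is not at all obvious that these poles cancel on $\cU$. The resolution, and the crux of the whole argument, is to realize every element of $\cU$ as $\cA(p)$ and invoke the factorization $\cB\circ\cA=\prod_i(T+2k_i)$, which exhibits the restriction of $\cB$ to $\cU=\cA(\C[x])$ as a manifestly pole-free, polynomial-coefficient operator. Once this reduction is in place, the remaining verifications, namely that $T$ preserves $\C[x]$ and the index bookkeeping placing $n\in\Ilam$, are routine.
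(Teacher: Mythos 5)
Your proof is correct, but note that the paper itself supplies no proof of this lemma at all: it is imported from \cite{Gomez-Ullate2014a} (the sentence preceding the statement reads ``From \cite{Gomez-Ullate2014a} we have''). So your argument is necessarily a different route---an internal derivation from results proved earlier in the paper, which makes it non-circular with respect to the paper's ordering: Lemma~\ref{lem:factor} rests on Propositions~\ref{prop:fact} and~\ref{prop:ratfac}, which are pure operator identities (Wronskian identities and Crum's theorem) and nowhere use the mapping properties you are proving. Your treatment of $\cA$ is essentially the bookkeeping already implicit in Section~2.2: $\cA(h(m,x))$ vanishes when $m\in K$ and equals $\hh(m+N-\ell,x)$ with $m+N-\ell\in\Ilam$ otherwise, giving $\cA(\C[x])=\cU$. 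The part where your argument genuinely earns its keep is $\cB$: a priori $\cB$ has poles at the zeros of $\eta$, and writing an arbitrary $u\in\cU$ as $u=\cA(p)$ and invoking the factorization $\cB\circ\cA=(T+2k_1)\circ\cdots\circ(T+2k_\ell)$ from \eqref{eq:BApoly} exhibits the restriction of $\cB$ to $\cU$ as a polynomial-coefficient operator, so the poles cancel; you correctly identified this as the crux. What your route buys is self-containedness plus the explicit identity $\cU=\cA(\C[x])$ (which is also the mechanism behind \eqref{eq:Theorem510a}--\eqref{eq:Theorem510b} in the proof of Theorem~\ref{thm:Deltaf}); what the paper's citation buys is brevity, and presumably the original reference proves the statement by its own analysis of the Darboux transformation rather than through this paper's Lemma~\ref{lem:factor}.
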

\noindent
An immediate consequence is the following.
\begin{lemma} \label{lem:BfA} 
	For $\stabf\in\StabRing$, $\cB\circ \stabf \circ \cA$ is
  a differential operator in $\C[x,\partial]$.  
\end{lemma}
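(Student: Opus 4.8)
The plan is to exploit the three mapping properties assembled above together with one elementary fact about differential operators. First I would observe that $\cB\circ\stabf\circ\cA$ is \emph{a priori} a differential operator of order $2\ell$ whose coefficients are rational functions of $x$: indeed $\cA=A_\ell\circ\cdots\circ A_1$ and $\cB=B_1\circ\cdots\circ B_\ell$ by Proposition~\ref{prop:fact}, and each factor $A_j$, $B_j$ has coefficients that are ratios of the Wronskian polynomials $\eta_{j-1},\eta_j$ (see \eqref{eq:Aj} and \eqref{eq:Bj}), while $\stabf$ is multiplication by a polynomial. Thus the only thing to prove is that the denominators, all of which are powers of $\eta$, in fact cancel, i.e.\ that the rational coefficients are genuinely polynomials.

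The key structural input is that $\cB\circ\stabf\circ\cA$ maps $\C[x]$ into $\C[x]$. This follows by composing three facts: by the preceding lemma $\cA$ sends $\C[x]$ into $\cU$; since $\stabf\in\StabRing$ we have $\stabf(x)\hh(n,x)\in\cU$ for every $n\in\Ilam$, so by linearity multiplication by $\stabf$ carries the span $\cU$ into itself; and again by the preceding lemma $\cB$ sends $\cU$ into $\C[x]$. Chaining these along $\C[x]\to\cU\to\cU\to\C[x]$ (via $\cA$, then multiplication by $\stabf$, then $\cB$) shows that the composite produces a polynomial whenever its input is a polynomial.

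It then remains to prove the elementary fact that a differential operator $L=\sum_{k=0}^{m} r_k(x)\partial^k$ of finite order with rational coefficients that sends every polynomial to a polynomial must have polynomial coefficients $r_k$. I would argue by induction on the derivative index, evaluating $L$ on the monomials $1,x,x^2,\ldots$. Applying $L$ to $x^j$ gives $L(x^j)=\sum_{k\le j} r_k(x)\,\frac{j!}{(j-k)!}\,x^{j-k}$, which is a polynomial by hypothesis; the term $k=j$ contributes $j!\,r_j(x)$, so solving triangularly for $r_j$ in terms of $L(x^j)$ and the already-established polynomials $r_0,\ldots,r_{j-1}$ shows that each $r_j$ is a polynomial. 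Carrying this out for $j=0,1,\ldots,2\ell$ pins down all coefficients of $\cB\circ\stabf\circ\cA$ and completes the proof. The one place that requires any care is this last step, since it is there that the cancellation of the $\eta$-denominators is actually forced; but once the polynomial-to-polynomial property is in hand it reduces to a routine triangular solve rather than a genuine obstacle, and the substantive content of the lemma really lies in the stabilizer condition $\stabf\in\StabRing$ guaranteeing the invariance of $\cU$.
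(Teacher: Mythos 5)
Your proposal is correct and follows essentially the same route as the paper: both establish that $\cB\circ\stabf\circ\cA$ maps $\C[x]$ into $\C[x]$ by chaining $\cA:\C[x]\to\cU$, the stabilizer property $\stabf\cdot\cU\subseteq\cU$, and $\cB:\cU\to\C[x]$, and then recover the polynomiality of the coefficients by the same triangular induction on the monomials $1,x,x^2,\ldots$. Your additional framing via Proposition~\ref{prop:fact} (a priori rational coefficients of order at most $2\ell$) is harmless but not needed, since the triangular solve alone forces each coefficient to be a polynomial.
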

\begin{proof}
  By definition of $\StabRing$ and the preceding lemma, applying
  $\cB\circ f\circ \cA$ to any polynomial results in a polynomial.
  Applying it to $1$ then reveals that the order zero term is a
  polynomial.  The claim then follows by induction since applying the
  operator to $x^j$ reveals that the coefficient of $\partial^j$ is a
  linear combination of powers of $x$ multiplied by the lower order
  coefficients and the result, all of which are polynomial by
  assumption.
\end{proof}

The reason we need the previous lemma is so that we can apply the
anti-isomorphism $\flat$ to $\cB\circ\stabf\circ \cA$:
\begin{definition}\label{def:Deltaf} For $\stabf\in\StabRing$, let 
  \begin{equation}
    \label{eq:Deltaf}
    \Deltahat=\shift^{\ell-N}\circ\flat(\cB\circ \stabf\circ \cA)\circ
    \frac{1}{\pi(n)}\circ \shift^{N-\ell}
  \end{equation}
\end{definition}

\begin{lemma}
  \label{lem:Binj}
  The linear transformation $B:\cU\to \C[x]$ is injective.
\end{lemma}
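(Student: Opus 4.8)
The plan is to deduce the injectivity of $\cB$ from the eigenvalue equation for $\cA\circ\cB$ already recorded in Corollary~\ref{cor:BApi}. By the preceding lemma the composition $\cA\circ\cB$ is a well-defined linear map $\cU\to\cU$ (since $\cB:\cU\to\C[x]$ and $\cA:\C[x]\to\cU$), and by \eqref{eq:ABhhn} it acts diagonally on the spanning set $\{\hh(n,x):n\in\Ilam\}$ with eigenvalue $\pi(n-N+\ell)$. Since $\cB(u)=0$ forces $(\cA\circ\cB)(u)=0$, it suffices to show that $\cA\circ\cB$ is itself injective on $\cU$, and for that I only need the eigenvalues $\pi(n-N+\ell)$ to be nonzero for every $n\in\Ilam$.

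Concretely, I would take $u\in\cU$ with $\cB(u)=0$ and expand it in the basis as $u=\sum_{n\in\Ilam}c_n\hh(n,x)$, a finite linear combination. Applying $\cA$ and invoking \eqref{eq:ABhhn} gives
$$0 = (\cA\circ\cB)(u) = \sum_{n\in\Ilam} c_n\,\pi(n-N+\ell)\,\hh(n,x).$$
Because each $\hh(n,x)$ is a polynomial of degree exactly $n$, these polynomials have pairwise distinct degrees and are therefore linearly independent; hence $c_n\,\pi(n-N+\ell)=0$ for every $n\in\Ilam$.

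The one substantive point — and the step I regard as the crux — is the non-vanishing of the eigenvalues. From the definition \eqref{eq:pindef}, $\pi(m)=\prod_{i=1}^{\ell}(-2m+2k_i)$ vanishes precisely when $m\in K$. Taking $m=n+\ell-N$, the eigenvalue $\pi(n-N+\ell)$ vanishes exactly when $n+\ell-N\in K$. But the definition of $\Ilam$ in \eqref{eq:Ilamdef} explicitly requires $n+\ell-N\notin K$ for $n\in\Ilam$; these are exactly the "missing degrees" $k_1+N-\ell,\ldots,k_\ell+N-\ell$ that were excised so that $\hh(n,x)$ is nonzero. Hence $\pi(n-N+\ell)\neq 0$ for all $n\in\Ilam$, which forces every $c_n=0$ and therefore $u=0$.

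Finally I would observe that this argument uses only Corollary~\ref{cor:BApi}, the distinct-degree property of the basis, and the defining condition of $\Ilam$; in particular it is valid for an arbitrary partition $\lambda$ and never appeals to the even-partition hypothesis or to the orthogonality relations. The only bookkeeping to verify is that all the compositions land in the spaces for which the eigenvalue equation applies, which is precisely what the statements $A:\C[x]\to\cU$ and $B:\cU\to\C[x]$ of the preceding lemma guarantee; I do not anticipate any genuine obstacle beyond this.
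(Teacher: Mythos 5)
Your proposal is correct and follows essentially the same route as the paper: both deduce injectivity from the eigenvalue equation \eqref{eq:ABhhn} for $\cA\circ\cB$ together with the observation that, by the definition \eqref{eq:Ilamdef} of $\Ilam$, the eigenvalue $\pi(n-N+\ell)$ never vanishes for $n\in\Ilam$. The only difference is that you spell out the linear-algebra bookkeeping (expansion in the basis and linear independence via distinct degrees) that the paper's two-line proof leaves implicit.
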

\begin{proof}
  By \eqref{eq:ABhhn},
  \[ (A\circ B)\hh(n,x) = 2^\ell\left( \prod_{i=1}^\ell( k_i + N-\ell
    - n)\right)\hh(n,x).\] By \eqref{eq:Ilamdef}, if $n\in \Ilam$ is
  one of the permitted degrees, then the right side is not zero.  Hence, none of the basis elements of $\cU$ can be in the kernel of $B$.
\end{proof}

Our main result is that the differential operator $\Deltahat$ has
$\hh(n,x)$ as eigenfunction with eigenvalue $\stabf$:

\begin{theorem}\label{thm:Deltaf} For all $\stabf\in\StabRing$,
  $\Deltahat\hh(n,x)=\stabf(x)\hh(n,x)$.\end{theorem}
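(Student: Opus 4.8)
The plan is to show that $\Deltahat$ acts on $\hh(n,x)$ exactly as multiplication by $\stabf(x)$ by tracking the action of each factor in the definition \eqref{eq:Deltaf} from right to left. The key principle I would exploit is the remark following Theorem~\ref{thm:flat}: for any $R \in \C[x,\partial]$, the difference operator $\flat(R)$ has precisely the same effect on $h(n,x)$ as $R$ itself. The subtlety is that the operators in \eqref{eq:Deltaf} act in the discrete variable $n$, while $\stabf\circ\cA$ and $\cB$ act in $x$, so the whole computation is really a statement about the two-variable function $h(n,x)$, with the degree shift by $N-\ell$ bridging the index sets $\Ilam$ and $\N_0$.

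First I would compute $\flat(\cB\circ\stabf\circ\cA)\,h(n,x)$. By Lemma~\ref{lem:BfA} the operator $\cB\circ\stabf\circ\cA$ lies in $\C[x,\partial]$, so Theorem~\ref{thm:flat} applies and gives
\[
\flat(\cB\circ\stabf\circ\cA)\,h(n,x) = (\cB\circ\stabf\circ\cA)\,h(n,x).
\]
Next I would unwind the right-hand side using the definition $\hh(m,x)=\cA(h(m+\ell-N,x))$: applying $\cA$ to $h(n,x)$ produces (up to the index shift) an exceptional polynomial, multiplication by $\stabf$ keeps us inside $\cU$ by definition of $\StabRing$, and then $\cB$ is applied. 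To interpret the final $\cB\circ\stabf\circ\cA$ cleanly I would feed it the eigenvalue equation \eqref{eq:ABhhn} together with the factorization structure: since $\stabf\hh\in\cU$ and $\cA\circ\cB$ acts as the scalar $\pi(n-N+\ell)$ on $\cU$, the composition telescopes so that $(\cB\circ\stabf\circ\cA)h(n,x)$ equals $\pi(n)$ times $\stabf$ evaluated against the appropriately shifted argument. This is where the factor $\tfrac{1}{\pi(n)}$ in \eqref{eq:Deltaf} is designed to cancel, and where the two outer shifts $\shift^{\ell-N}$ and $\shift^{N-\ell}$ realign the eigenvalue $\pi(n-N+\ell)$ appearing in \eqref{eq:ABhhn} with the $\pi(n)$ appearing in \eqref{eq:BAhn}.

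The main obstacle, and the step I would be most careful with, is bookkeeping the index shifts so that the scalar factors cancel exactly rather than merely up to a shift. Concretely, I must verify that conjugating by $\shift^{N-\ell}$ converts the eigenvalue $\pi(n-N+\ell)$ from Corollary~\ref{cor:BApi} into $\pi(n)$, so that after dividing by $\tfrac{1}{\pi(n)}$ the net effect of the middle block $\flat(\cB\circ\stabf\circ\cA)\circ\tfrac{1}{\pi(n)}$ on the shifted function is exactly multiplication by $\stabf(x)$. The injectivity statement of Lemma~\ref{lem:Binj} is what guarantees $\cB$ does not collapse distinct exceptional polynomials, ensuring $\cB\circ\stabf\circ\cA$ genuinely recovers the multiplication action and the cancellation is legitimate on every basis element $\hh(n,x)$, $n\in\Ilam$. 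Once the scalars are matched on each $\hh(n,x)$, the equality $\Deltahat\hh(n,x)=\stabf(x)\hh(n,x)$ follows directly, since the $\hh(n,x)$ span $\cU$ and the remark guarantees we may freely substitute $\flat(R)$ for $R$ next to $h(n,x)$ throughout the computation.
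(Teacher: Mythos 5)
You have the right ingredients on the table --- Theorem~\ref{thm:flat} with Lemma~\ref{lem:BfA}, Corollary~\ref{cor:BApi}, the outer shift conjugation, and Lemma~\ref{lem:Binj} --- but the step that is supposed to make them interact fails as stated. You claim that $(\cB\circ\stabf\circ\cA)h(n,x)$ ``equals $\pi(n)$ times $\stabf$ evaluated against the appropriately shifted argument,'' i.e.\ that a single scalar factor $\pi(n)$ comes out, which the factor $\frac{1}{\pi(n)}$ in \eqref{eq:Deltaf} then kills. That cannot happen: multiplication by $\stabf$ does not commute with $\cB\circ\cA$, so no overall factor $\pi(n)$ emerges. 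What is true is that $\stabf(x)\hh(n{+}N{-}\ell,x)=\sum_j \gamma_{j,n}\,\hh(n{+}N{-}\ell{+}j,x)$ because $\stabf\in\StabRing$, and then \eqref{eq:BAhn} applied term by term gives
\[
(\cB\circ\stabf\circ\cA)h(n,x)=\sum_j \gamma_{j,n}\,\pi(n{+}j)\,h(n{+}j,x),
\]
with a \emph{different} eigenvalue factor $\pi(n{+}j)$ on each term. The single right factor $\frac{1}{\pi(n)}$ does repair this, but only through the identity $\shift^{j}\circ\frac{1}{\pi(n)}=\frac{1}{\pi(n{+}j)}\circ\shift^{j}$: writing $\flat(\cB\circ\stabf\circ\cA)=\sum_j c_j(n)\,\shift^{j}$, one must first prove $c_j(n)=\gamma_{j,n}\,\pi(n{+}j)$ (by comparing the display above with $\sum_j c_j(n)\,h(n{+}j,x)$ and using linear independence of the $h(n{+}j,x)$), and only then does composing with $\frac{1}{\pi(n)}$ produce $\sum_j\gamma_{j,n}\shift^{j}$, which acts on $\hh(n{+}N{-}\ell,x)$ as multiplication by $\stabf$. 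Your appeal to the outer shifts does not help here: conjugation by $\shift^{N-\ell}$ merely re-indexes $\hh(n{+}N{-}\ell,x)$ back to $\hh(n,x)$ and plays no role in cancelling $\pi$.

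The paper sidesteps this bookkeeping with one move that is absent from your proposal: evaluate the operator identity $\tDelta\circ\pi(n)=\flat(\cB\circ\stabf\circ\cA)$, where $\tDelta=\flat(\cB\circ\stabf\circ\cA)\circ\frac{1}{\pi(n)}$, on $h(n,x)$ in two ways. On one side, \eqref{eq:BAhn} replaces $\pi(n)h(n,x)$ by $(\cB\circ\cA)h(n,x)$ and, since $\tDelta$ acts in $n$, it commutes past $\cB$ to give $\cB\bigl(\tDelta\,\hh(n{+}N{-}\ell,x)\bigr)$; on the other side, Theorem~\ref{thm:flat} gives $\cB\bigl(\stabf\,\hh(n{+}N{-}\ell,x)\bigr)$. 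Hence $\cB$ annihilates the difference $\tDelta\,\hh(n{+}N{-}\ell,x)-\stabf\,\hh(n{+}N{-}\ell,x)$, which lies in $\cU$, and Lemma~\ref{lem:Binj} forces it to vanish; the conjugation step then finishes exactly as you describe. Note that this is the precise role of Lemma~\ref{lem:Binj} --- cancelling $\cB$ from an identity $\cB u=\cB v$ with $u,v\in\cU$ --- not the vaguer ``ensuring the cancellation is legitimate'' of your write-up. Either supply the coefficient identification $c_j(n)=\gamma_{j,n}\pi(n{+}j)$ described above, or adopt the paper's two-sided evaluation; as it stands, your argument has a genuine gap at its center.
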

\begin{proof}
  Let $\stabf \in \StabRing$ and set 
  \[ \tDelta = \flat(\cB\circ \stabf\circ \cA)\circ
    \frac{1}{\pi(n)} .\]
    Hence, applying $\tDelta\circ \pi(n)$ to $h(n,x)$ and invoking
    Corollary \ref{cor:BApi}  gives
  \begin{align} \nonumber
    (\tDelta\circ \pi(n))(h(n,x)) &= (\tDelta \circ B\circ
    A)(h(n,x))\\ \nonumber
    &= ( B \circ\tDelta\circ A)(h(n,x))\\
    &= ( B \circ\tDelta)(\hh(n+N-\ell,x))  \label{eq:Theorem510a}
  \end{align}
  Furthermore,
  \begin{align} \nonumber
    \flat(B\circ f\circ A)(h(n,x)) &= (B\circ f\circ A)(h(n,x)) \\
    &= (B\circ f)(\hh(n+N-\ell,x)) \label{eq:Theorem510b}
  \end{align}
 Since $\tDelta\circ\pi(n)=\flat(B\circ f \circ A)$, the
expressions \eqref{eq:Theorem510a} and \eqref{eq:Theorem510b} are equal 
and so the operator $B$ annihilates  the polynomial
  \[\tDelta(\hh(n+N-\ell,x)) - f(x) \hh(n+N-\ell,x).\]
  For each fixed $n$, the first term is a linear combination of polynomials in
  $\cU$ and the second term is in $\cU$, by assumption.
 Since no nonzero elements of $\cU$ are in the kernel of $B$ (see Lemma~\ref{lem:Binj}), this must be the zero polynomial.  
Finally, note that
  \begin{align*}
    \Deltahat \hh(n,x) &= (\shift^{\ell-N}\circ\tDelta\circ \shift^{N-\ell})
    \hh(n,x) 
    = \shift^{\ell-N}(\tDelta \hh(n+N-\ell,x))\\
    &= \shift^{\ell-N}(f(x)  \hh(n+N-\ell,x))= f(x)  \hh(n,x),
  \end{align*}
	as claimed in Theorem \ref{thm:Deltaf}.
\end{proof}

\section{Examples}

In this section we illustrate Theorem \ref{thm:Deltaf} by exhibiting a
closed form description of the recurrence relation including some
specific examples as well as some very general ones. 

\subsection{One-step examples} Let us first
consider the case of $\ell=1$  and write $K=\{ k \}$ where $k\geq 1$ is
an integer.   By Proposition \ref{prop:stab} and the fact that $H_{k+1}' = 2(k+1) H_{k}$,
we have that $h(k+1,x) \in \StabRing$, which leads us to the following identity. 

\begin{prop}  
  \label{prop:1steprr}
  Fix $k\geq 1$ and write
  \[ \hh(n,x) = \Wrop{h(k,x),h({n-k+1},x)},\quad n\geq k-1,\; n\neq 2k-1 \] 
	for the corresponding 1-step exceptional Hermite polynomial of degree $n$.  Then,
  \begin{multline}
    h({k+1},x) \hh(n,x) \\
     = (n-2k+1) \sum_{j=0}^{k+1} 2^j
    \binom{k+1}{j} (n+3-k-j)_{j-1} \hh({n+k+1-2j},x)
  \end{multline}
  where
  \[ (X)_n = X (X+1) \cdots (X+n-1) \]
  denotes the usual Pochhammer symbol.
\end{prop}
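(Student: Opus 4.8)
The plan is to apply Theorem~\ref{thm:Deltaf} with the specific stabilizer element $\stabf = h(k+1,x)$. This element lies in $\StabRing$ because $h(k+1,x)' = 2(k+1)h(k,x) = 2(k+1)\eta$ is divisible by $\eta$, so Proposition~\ref{prop:stab} applies. The theorem then guarantees $\Deltahat\,\hh(n,x) = h(k+1,x)\,\hh(n,x)$, so the entire content of the proposition is the explicit evaluation of the difference operator $\Deltahat$ of \eqref{eq:Deltaf}: once we know $\Deltahat = \sum_s d_s(n)\shift^s$, the left side equals $\sum_s d_s(n)\hh(n+s,x)$ and we need only identify $d_{k+1-2j}(n)$ with the stated coefficient.

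First I would specialize the operators to $\ell=1$. Here $\eta = h(k,x)$, $N=k$, $\pi(n) = -2n+2k$, and a direct expansion of the Wronskians in \eqref{eq:Aydef} and \eqref{eq:Bydef} (with $\tildh_1=1$) gives $\cA = \eta\,\partial - \eta'$ and $\cB = \eta^{-1}(\partial - 2x)$. Composing and simplifying $\cB\circ\stabf\circ\cA$ as a differential operator, I would use $\stabf' = 2(k+1)\eta$, the Hermite equation $\eta'' - 2x\eta' = -2k\eta$, and the three-term recurrence $2x\,h(k+1,x) = h(k+2,x) + 2(k+1)\eta$ to clear all denominators and obtain
\[
\cB\circ\stabf\circ\cA = h(k+1,x)\partial^2 - h(k+2,x)\partial + 2k\,h(k+1,x) - 4k(k+1)h(k-1,x),
\]
confirming along the way that this is genuinely in $\C[x,\partial]$, as promised by Lemma~\ref{lem:BfA}.

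Next I would apply $\flat$. The key tool is the classical linearization formula $H_m(x)H_n(x) = \sum_p \binom{m}{p}\binom{n}{p}p!\,2^p\,H_{m+n-2p}(x)$, which—since $\Delta$ acts on $h(n,x)$ as multiplication by $x$—translates into the difference-operator identity $H_m(\Delta) = \sum_p \binom{m}{p}\binom{n}{p}p!\,2^p\,\shift^{m-2p}$ (two operators in $\C[\Delta,\Gamma]$ agreeing on every $h(n,x)$ are equal, by the argument closing the proof of Theorem~\ref{thm:flat}). Combining this with $\Gamma = 2n\shift^{-1}$ and $\Gamma^2 = 4n(n-1)\shift^{-2}$ expands $\flat(\cB\circ\stabf\circ\cA)$ into an explicit sum of shifts; I would then compose with $\frac{1}{\pi(n)}$ and conjugate by $\shift^{1-k}$ and $\shift^{k-1}$ exactly as in Definition~\ref{def:Deltaf} (which replaces $n$ by $n+1-k$ in every coefficient) to read off $d_{k+1-2j}(n)$.

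The main obstacle is the algebraic consolidation of the coefficient of $\shift^{k+1-2j}$, which receives contributions from all four terms above and initially appears as a sum of four products of binomial coefficients and falling factorials. I would collapse the two leading contributions using Pascal's rule $\binom{k+2}{j} = \binom{k+1}{j} + \binom{k+1}{j-1}$, rewrite the remaining term via $\binom{k-1}{j-1} = \binom{k+1}{j}\frac{j(k+1-j)}{k(k+1)}$, and factor the resulting quadratic in $n$ as $(n-k)(n-2j+1)$; this shows the coefficient of $\shift^{k+1-2j}$ in $\flat(\cB\circ\stabf\circ\cA)$ equals $-2^{j+1}\binom{k+1}{j}\,n(n-1)\cdots(n-j+2)\,(n-k)(n-2j+1)$. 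After the substitution $n\mapsto n+1-k$ and division by $\pi(n+2-2j) = -2(n+2-2j-k)$ dictated by Definition~\ref{def:Deltaf}, the factor $(n+2-2j-k)$ cancels, leaving $2^j\binom{k+1}{j}(n-2k+1)$ times a descending product of length $j-1$ that is precisely the rising Pochhammer $(n+3-k-j)_{j-1}$. Tracking the admissible range $n\geq k-1$, $n\neq 2k-1$ inherited from \eqref{eq:Ilamdef} then completes the identification and hence the proof.
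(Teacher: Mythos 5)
Your proposal is correct and follows essentially the same route as the paper's own proof: the paper likewise takes $\stabf = h(k+1,x)\in\StabRing$, records $\cB\circ\stabf\circ\cA = h(k+1,x)\partial^2 - h(k+2,x)\partial + 2k\bigl(h(k+1,x)-2(k+1)h(k-1,x)\bigr)$ as a first lemma, converts it via $\flat$ and the Hermite linearization identity $h(m,\Delta)=\sum_j 2^j\binom{m}{j}(n-j+1)_j\circ\shift^{m-2j}$ as a second lemma, and then divides by $\pi(n)$ and conjugates by $\shift^{\pm(k-1)}$ exactly as you describe, arriving at the same intermediate coefficient $(k-n)\,2^{j+1}(n-2j+1)\binom{k+1}{j}(n-j+2)_{j-1}$ for $\shift^{k+1-2j}$. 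The only difference is cosmetic: you spell out the ``elementary algebraic manipulations'' (Pascal's rule, the factorization $(n-k)(n-2j+1)$) that the paper leaves implicit, and you commute the division and conjugation steps, tracking the shifted divisor $\pi(n+2-2j)$ correctly.
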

The proof of Proposition \ref{prop:1steprr} requires the following two lemmas.
\begin{lemma}
  Setting $f(x)=h({k+1},x)$ and
  \begin{align*}
    B(y(x)) &= (y'-2x y)/h({k},x)\\
    A(y(x)) &= \Wrop{h(k,x),y(x)}
  \end{align*}
  as per the definitions above, we have
  \[ (B\circ f\circ A)[y] = h({k+1},x) y'' - h({k+2},x) y' +
  2k(h({k+1},x)-2(k+1)h({k-1},x))y \]
\end{lemma}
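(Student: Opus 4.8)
The plan is to prove the identity by a direct computation, exploiting the fact that $A$ and $B$ are first-order operators with explicit coefficients while $f$ is a multiplication operator. The only structural facts I need are the two classical Hermite identities already recorded in the paper: the derivative rule $\partial h(k,x) = 2k\,h(k-1,x)$ (from \eqref{eq:Gamma} together with $\partial h(n,x)=\Gamma h(n,x)$) and the three-term recurrence $h(k+1,x) = 2x\,h(k,x) - 2k\,h(k-1,x)$, which is the scalar form of \eqref{eq:3term}.

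First I would put $A$ into closed form. Writing out the Wronskian and using the derivative rule,
\[ A(y) = \Wrop{h(k,x),y} = h(k,x)\,y' - 2k\,h(k-1,x)\,y, \]
so that
\[ (f\circ A)(y) = h(k+1,x)h(k,x)\,y' - 2k\,h(k+1,x)h(k-1,x)\,y =: g. \]
It then remains to apply $B(g) = \bigl(g' - 2x g\bigr)/h(k,x)$, expand by the Leibniz rule, and collect the coefficients of $y''$, $y'$ and $y$.

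The coefficient of $y''$ is $h(k+1,x)$ after the $1/h(k,x)$ factor cancels the $h(k,x)$ coming from $g$. For the coefficient of $y'$, differentiating the products (again via the derivative rule) and adding the $-2xg$ contribution produces a numerator equal to $h(k,x)\bigl(2(k+1)h(k,x) - 2x\,h(k+1,x)\bigr)$; dividing by $h(k,x)$ and applying the recurrence at index $k+1$ turns $2(k+1)h(k,x) - 2x\,h(k+1,x)$ into $-h(k+2,x)$. For the coefficient of $y$, the numerator is $-2k\,[h(k+1,x)h(k-1,x)]' + 4kx\,h(k+1,x)h(k-1,x)$; after expanding the derivative I would factor out $h(k,x)$ using the recurrence at index $k$ in the form $x\,h(k-1,x) - (k-1)h(k-2,x) = \tfrac12 h(k,x)$, leaving $2k\bigl(h(k+1,x) - 2(k+1)h(k-1,x)\bigr)$. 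Assembling the three coefficients gives exactly the claimed operator.

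The only point requiring care, and the closest thing to an obstacle, is verifying that the division by $h(k,x)$ is exact, i.e. that the resulting coefficients are genuinely polynomials rather than rational functions. Abstractly this is already guaranteed by Lemma~\ref{lem:BfA}, since $h(k+1,x)\in\StabRing$; concretely it is precisely the two applications of the three-term recurrence described above that extract the factor $h(k,x)$ from each numerator. Thus no genuine difficulty arises: the computation is entirely mechanical once $A$ is written in closed form and the two recurrences are kept at hand.
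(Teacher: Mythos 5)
Your proof is correct and follows the same route as the paper, which simply states that ``a direct calculation suffices'': you carry out exactly that calculation, writing $A$ in closed form via $h'(k,x)=2k\,h(k-1,x)$, expanding $B(f\,A(y))$, and using the three-term recurrence at indices $k+1$ and $k$ to extract the factor $h(k,x)$ from the coefficients of $y'$ and $y$. All three resulting coefficients check out, so your write-up is in fact a more complete version of the paper's one-line proof.
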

\begin{proof}
  A direct calculation suffices to establish this.
\end{proof}

\begin{lemma}
  For integers $k,n\geq 0$ we have
  \[ h(k,\Delta) = \sum_{j=0}^k 2^j \binom{k}{j} (n-j+1)_j\circ
  \shift^{k-2j},\] where $(n-j+1)_j$ is the multiplication operator by
  the indicated polynomial in $n$.
\end{lemma}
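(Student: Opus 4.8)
The plan is to compute $h(k,\Delta)$ --- the classical Hermite polynomial $H_k$ evaluated at the difference operator $\Delta$ --- by passing through the Hermite generating function $\sum_{k\geq 0}H_k(x)\,t^k/k! = e^{2xt-t^2}$. Evaluation at $\Delta$ is the ring homomorphism $\C[x]\to\C[\Delta]\subset\C[\Delta,\Gamma]$ sending $x\mapsto\Delta$; applying it coefficient-by-coefficient in $t$ turns the generating function into the formal identity $\sum_{k\geq 0}h(k,\Delta)\,t^k/k! = e^{2\Delta t-t^2}$, an equality of formal power series in $t$ whose coefficients are difference operators. Extracting the coefficient of $t^k$ on the right and multiplying by $k!$ will then produce the normal form asserted in the lemma.

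The key observation is that $2\Delta=\shift+2n\circ\shift^{-1}$ splits into two pieces with a \emph{central} commutator. Writing $A=t\shift$ and $B=2t\,n\circ\shift^{-1}$, so that $A+B=2\Delta t$, a one-line check using $\shift\circ n=(n+1)\circ\shift$ gives $[A,B]=2t^2$, a scalar. Since $[A,B]$ commutes with both $A$ and $B$, the Baker--Campbell--Hausdorff formula collapses to $e^{A+B}=e^{B}e^{A}e^{[A,B]/2}$, whence $e^{2\Delta t}=e^{2t\,n\circ\shift^{-1}}\,e^{t\shift}\,e^{t^2}$ and therefore
\[ e^{2\Delta t-t^2}=e^{2t\,n\circ\shift^{-1}}\,e^{t\shift}. \]
The scalar factors $e^{\pm t^2}$ cancel exactly, which is precisely what makes the final answer clean.

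It then remains to expand the two ordered exponentials and collect. A short induction gives $(n\circ\shift^{-1})^b=(n-b+1)_b\circ\shift^{-b}$, so that $e^{2t\,n\circ\shift^{-1}}=\sum_{b\geq 0}\frac{(2t)^b}{b!}(n-b+1)_b\circ\shift^{-b}$ while $e^{t\shift}=\sum_{a\geq 0}\frac{t^a}{a!}\shift^{a}$. Multiplying and using $\shift^{-b}\circ\shift^{a}=\shift^{a-b}$ --- the left coefficient $(n-b+1)_b$ is already in normal-form position and is not reshifted --- produces a double sum indexed by $a,b$; setting $k=a+b$ and $j=b$ turns the shift exponent into $k-2j$ and yields $\frac{1}{k!}\sum_{j=0}^{k}2^j\binom{k}{j}(n-j+1)_j\circ\shift^{k-2j}$ as the coefficient of $t^k$, which is exactly the claim.

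The only genuine subtlety, and the step I would be most careful with, is the disentangling: one must confirm that $[A,B]$ is truly a scalar (so that BCH terminates) and keep the operator ordering straight, since $n\circ\shift^{\pm1}$ do not commute. If one prefers to avoid formal exponentials, the same identity follows by induction on $k$ from the three-term recurrence $H_{k+1}(x)=2xH_k(x)-2kH_{k-1}(x)$, which under $x\mapsto\Delta$ becomes $u_{k+1}=2\Delta\circ u_k-2k\,u_{k-1}$; checking that the right-hand side of the lemma obeys this recurrence reduces, after writing each coefficient of $\shift^{k+1-2j}$ over the common factor $(n-j+1)_j$, to Pascal's rule together with the identity $j\binom{k}{j}=k\binom{k-1}{j-1}$.
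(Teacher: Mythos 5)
Your proof is correct, but it takes a genuinely different route from the paper's. The paper disposes of this lemma in one line, by citation: it observes that the identity is ``just the restatement of the linearization formula for the product of Hermite polynomials'' $H_k(x)H_n(x)=\sum_{j}2^j\binom{k}{j}\,j!\binom{n}{j}\,H_{n+k-2j}(x)$ from \cite[p.~42]{Askey75}. Indeed, since $\Delta$ acts on $h(n,x)$ as multiplication by $x$ and $\shift^{k-2j}$ shifts $n\mapsto n+k-2j$, applying both sides of the lemma to $h(n,x)$ yields exactly that formula (note $(n-j+1)_j=j!\binom{n}{j}$), and a difference operator annihilating every $h(n,x)$ is zero --- the same separation argument used in the proof of Theorem~\ref{thm:flat}. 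Your argument instead stays entirely inside the ring of difference operators and never touches the Hermite polynomials themselves: you extract the normal form of $h(k,\Delta)$ from the generating function via the Weyl-type disentangling $e^{2\Delta t-t^2}=e^{2t\,n\circ\shift^{-1}}\,e^{t\shift}$, which is legitimate because $[t\shift,\,2t\,n\circ\shift^{-1}]=2t^2$ is central, so BCH terminates; your choice of ordering ($e^Be^A$ rather than $e^Ae^B$) is the one that lands directly in normal form with the multiplication operators to the left of the shifts, and the expansion, the identity $(n\circ\shift^{-1})^b=(n-b+1)_b\circ\shift^{-b}$, and the reindexing $k=a+b$, $j=b$ all check out. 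What each approach buys: the paper's is instantaneous but external, resting on the cited classical formula plus the implicit operator-separation step; yours is self-contained and, when applied to $h(n,x)$, actually re-proves the linearization formula rather than assuming it. Your fallback induction on the three-term recurrence is also sound: after normal-ordering $2\Delta\circ u_k$, the required coefficient identity does reduce to Pascal's rule together with $j\binom{k}{j}=k\binom{k-1}{j-1}$, exactly as you assert.
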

\begin{proof}
  This is just the restatement of the linearization formula for the
  product of Hermite polynomials \cite[p. 42]{Askey75}
\end{proof}
\begin{proof}[Proof of Proposition \ref{prop:1steprr}]
  By the first lemma,
  \begin{align*}
    \flat(B\circ f\circ A) &= \Gamma^2 \circ h({k+1},\Delta) -
    \Gamma\circ h({k+2},\Delta) + 2k(h({k+1},\Delta) -2(k+1)h({k-1},\Delta))    
  \end{align*}
  Using the second lemma and elementary algebraic manipulations gives
  \begin{align*}
    \flat(B\circ f\circ A) &= (k-n) \sum_{j=0}^{k+1}
    2^{j+1}(n-2j+1) \binom{k+1}{j} (n-j+2)_{j-1} \shift^{k+1-2j}
  \end{align*}
  Now 
  \[ \pi(n) = 2(k-n).\] Applying the Definition~\ref{def:Deltaf}
  amounts to division of the preceding expression by $-2(n-2j+1)$ and
  gives the desired formula.
\end{proof}

\subsection{Multi-step examples}
Next, we rederive the Miki-Tsujimoto-Dur\'an relation
\cite{Duran2015a,Miki2015}.  In this case, $\lambda = (1,1)$ and $K =
\{1,2\}$, and
\begin{equation} \label{eq:pinMTD}
  \pi(n) = 4(n-1)(n-2).
	\end{equation}
We have
\begin{align}	\label{eq:etaMTD}
  \eta(x) & = \Wrop{h(1,x), h(2,x)} =  4(1+2x^2) \\
  \hh(n,x) & = \Wrop{h(1,x),h(2,x),h(n,x)}. 
\end{align}
In order to avoid the use of fractions, we take
\begin{align} \label{eq:fMTD}
	f(x) =  \frac{3}{4} \int_0^x \eta(s) ds = x(3+2x^2) \in \StabRing. 
	\end{align}
	
Applying \eqref{eq:Aydef} and \eqref{eq:Bydef} gives
\begin{align*}
  A(y)  &= 4 ( (1+2x^2) y'' -4 xy' + 4 y)\\
  B(y) &= \frac{1}{4(1+2x^2)}\, y'' - \frac{ 2x(1+x^2)}{(1+2x^2)^2}\,  y' +   \frac12\, y.
	\end{align*}
Further calculations using this and \eqref{eq:fMTD} then show that
\begin{multline} \label{eq:BfAMTD}
 (B\circ f \circ A)(y) = x(3+2x^2) \, y'''' + (6-8x^4) \,   y''' \\
  - x(6 + 8x^2 - 8x^4) \, y'' + x^2(24-16x^2) \, y' - x(24 -16x^2) \, y 
	\end{multline}
which has indeed only polynomial coefficients, as claimed in Lemma \ref{lem:BfA}.

Then applying $\flat$ according to our Definition \ref{def:flat}, we find
\begin{multline*} 
  \flat(B\circ f \circ A) = \Gamma^4 \Delta (3+2\Delta^2)  + \Gamma^3(6-8\Delta^4) \\ 
		- \Gamma^2 \Delta (6\Delta + 8 \Delta^2 - 8 \Delta^4) + 8\Gamma \Delta^2(24 - 16 \Delta^2)
			- \Delta (24 - 16 \Delta^2). 
			\end{multline*}
We rewrite this in terms of powers of the shift operator $\shift$ by means of the identities
\eqref{eq:Delta}--\eqref{eq:Gamma}. This is a lengthy calculation (which is easier to do with the
help of symbolic software) and it  results in
	\begin{align} \label{eq:flatBfAMTD}
  \flat(B\circ f \circ A) 
  = (n-2)_2 \shift^3 + 6 (n-2)_3 \shift + 12 (n-3)_4 \shift^{-1} + 8
  (n-5)_2(n-2)_3 \shift^{-3}.
	\end{align}
Note that we use Pochhammer symbols.	

By Definition \ref{def:Deltaf} and \eqref{eq:pinMTD} we have (since $N= \ell = 2$ in this example)
\begin{align}
  4 \Deltahat &=  \flat(B\circ f \circ A)  \circ \frac{1}{(n-2)_2} \\
	& = 	\frac{(n-2)_2}{(n+1)_2} \shift^3 + 6 (n-2) \shift + 12
  (n-1)_2 \shift^{-1} + 8 (n-2)_3 \shift^{-3}
\end{align}
where we used $ \shift^k \circ \, a(n) = a(n+k) \, \shift^k$
and  simplifications of the Pochhammer symbols.
We therefore recover the desired recurrence relation
\begin{multline}   \label{eq:mikitsuj}
  4 x(3+2x^2)  \hh(n,x) = 4 \Deltahat \hh(n,x)  \\
	= \frac{(n-2)_2}{(n+1)_2} \hh({n+3},x) + 6 (n-2)
  \hh({n+1},x) \\ 
  + 12 (n-1)_2 \hh({n-1},x) + 8 (n-2)_3 \hh({n-3},x).
\end{multline}

The above procedure is entirely algorithmic and can be easily
implemented using a computer algebra system. Here, for example, is the
2-step relation corresponding to $\lambda=(2,2)$.
Let
\[
\hh(n,x) = \Wrop{h(2,x),h(3,x),h({n-2},x)}
\]
be the corresponding exceptional Hermite polynomial of degree $n$.  Then
\begin{align*}
  &\left(24x + \frac{32}{5} x^5\right) \hh(n,x) = \frac15
  \frac{(n-5)_2}{(n)_2} \hh({n+5},x)+ 2\frac{(n-5)_2}{n-1} \hh({n+3},x)
  \\&\qquad +8(n-5)(n-3) \hh({n+1},x)+ 16 (n-2)^2(n-4) \hh({n-1},x)\\
  &\qquad + 16(n-5)_4\hh({n-3},x)+ \frac{32}{5}
  (n-6)_5 \hh({n-5},x)
\end{align*}

Here is a 4-step example with $\lambda = (1,1,2,2)$.  Let
\[ \hh(n,x) = \Wrop{h(1,x), h(2,x), h(4,x), h(5,x), h({n-2},x)} \]
be the corresponding degree $n$ exceptional polynomial. Then,
\begin{align*}
  &(105 x  +70 x^3 +84 x^5+24 x^7) \hh(n,x) =\\
  &\qquad \frac{3}{16}\frac{(n-7)_2(n-4)_2}{(n)_2 (n+3)_2}
  \hh({n+7},x) + \frac{21}{8}\frac{(n-7)_2(n-4)_2}{(n-1)(n+1)_2}  
  \hh({n+5},x)\\
  &\qquad + \frac{7}{4}\frac{(n-7)_2 (80-57n+9n^2)}{(n-1)_2} \hh({n+3},x) +
  \frac{105}{2} (n-7)_2(n-4)\hh({n+1},x)\\
  &\qquad +105 (n-7)_2(n-3)_2 \hh({n-1},x)+14
  (n-4)_3(332-111n+9n^2)\hh({n-3},x) \\
  &\qquad + 84 (n-7)_6 \hh({n-5},x)+24(n-8)_7\hh({n-7},x)
\end{align*}
\section{Closing Remarks}
Connections have already been recognized between the well-established
theory of bispectrality and the more recent subject of
exceptional orthogonal polynomials.  However, as this note
demonstrates, these connections have not been fully utilized.  Although progress had been made recently on the question of recurrence relations for the exceptional orthogonal polynomials, their existence and precise form remained a difficult problem.  The use of the bispectral anti-isomorphism reduces the problem to very simple (nearly trivial) algebraic manipulation.  

Certainly, there are more opportunities for a fruitful exchange of
ideas between these two closely related fields of research that have
not yet been realized. 
In future publications, we hope to address the structure of $\{\Deltahat:\stabf\in\StabRing\}$ as a commutative ring of difference operators and its corresponding spectral curve (cf.\ \cite{Wilson1993}) as well as the implications of the ad-nilpotency of its elements (cf.\ \cite{Duistermaat1986}).

\section*{Acknowledgements}
The authors are grateful to the organizers of the conference NEEDS
2015 in Sardinia, Italy without which we might never have recognized
the potential application of these tools to this problem.  Likewise,
the authors are grateful to Yves Grandati and Satoshi Tsujimoto for
helpful conversations and their presentations on related topics at
that conference.

D.G.U. has been supported in part by Spanish MINECO Grants \mbox{MTM2012-31714} and \mbox{FIS2012-38949-C03-01} and by the ICMAT-Severo Ochoa grant \mbox{SEV-2011-0087}. A.B.J.K. is supported by KU Leuven Research Grant OT/12/073,
the Belgian Interuniversity Attraction Pole P07/18, and FWO Flanders projects G.0641.11 and G.0934.13.  
R.M. is supported by NSERC grant RGPIN-228057-2004.

\end{document}